\documentclass[11pt]{article}
\usepackage{amsmath,amsthm,amssymb}
\usepackage[T1]{fontenc}
\usepackage[utf8]{inputenc}
\usepackage[dvipdf]{graphicx}
\usepackage{color}
\usepackage{epstopdf}
\usepackage{dsfont}
\usepackage{mathtools}
\usepackage{enumerate}
\usepackage{mathrsfs}
\usepackage[normalem]{ulem}
\usepackage[colorlinks=true,citecolor=red,linkcolor=db,urlcolor=blue,pdfstartview=FitH]{hyperref}
\usepackage[numbers]{natbib}

\usepackage{xcolor}

\definecolor{db}{RGB}{0, 0, 130}
\definecolor{rp}{rgb}{0.25, 0, 0.75}
\definecolor{dg}{rgb}{0, 0.6, 0}

\numberwithin{equation}{section}

\newtheorem{thm}{Theorem}[section]

\newtheorem{definition}{Definition}[section]
\newtheorem{prop}{Proposition}[section]

\newtheorem{example}{Example}[section]
\newtheorem{assumption}{Assumption}[section]
\newtheorem{lemma}{Lemma}[section]

\newtheorem{remark}{Remark}[section]

\def\pa{\partial}

\def\Hc{\mathcal H}

\def\Pc{\mathcal P}

\def\N{\mathbb N}

\def\R{\mathbb R}
\def\S{\mathbb S}
\def\T{\mathbb T}
\def\Z{\mathbb Z}

\def\ub{\bar u}
\def\Vb{\overline{V}}
\def\Jb{\bar{J}}

\def\x{\times}

\def\eps{\varepsilon}

\title{Quantitative Particle Approximation for Controlled Nonlinear Filtering
\thanks{\textit{2020 Mathematics Subject Classification.}
Primary 93E11, 93E20; Secondary 49L25, 60H30, 65C35.
\textit{Keywords.} Nonlinear filtering; partially observed control;
finite-particle approximation; Wasserstein Hamilton--Jacobi--Bellman equations;
common noise; particle filters; viscosity solutions.}}
\author{Erhan Bayraktar
        \footnote{Department of Mathematics, University of Michigan. erhan@umich.edu. E. Bayraktar is partially supported by the NSF Grants DMS-2507940 and DMS-2406232 and by the Susan M. Smith Professorship.}
        \and Ibrahim Ekren
        \footnote{Department of Mathematics, University of Michigan. iekren@umich.edu. I. Ekren is partially supported by the NSF Grant DMS-2406240.}
        \and Xihao He
        \footnote{Department of Mathematics, University of Southern California.
        xihaohe@usc.edu}
        \and Xin Zhang
        \footnote{Department of Finance and Risk Engineering, New York University. xz1662@nyu.edu. X. Zhang is partially supported by the NSF Grant DMS-2508556.}
        }
\date{\today}

\begin{document}

\maketitle

\begin{abstract}
We estimate convergence rates of value functions for particle approximations
of a controlled nonlinear filtering problem. The state is a McKean--Vlasov
diffusion on the flat torus, driven by hidden idiosyncratic noise and observed
common noise. The filter---the conditional law of the state given the
observations---serves as the state variable of the control problem, and the
associated value function solves a second-order Hamilton--Jacobi--Bellman
equation on the Wasserstein space. We approximate this problem by a
centralized \(N\)-particle control problem with independent idiosyncratic
noises and a common observation noise. The framework accommodates nonseparable
rewards and controlled drifts. Since a single control is applied to the entire
population, the Hamiltonian is defined by an optimization performed after
integration over the population. Under smoothness of the data, uniform
ellipticity, and regularity of this Hamiltonian, we establish uniform
value-function error bounds of order \(N^{-1/6}\) for \(d=1\),
\(N^{-1/6}(\log N)^{1/3}\) for \(d=2\), and \(N^{-1/(3d)}\) for \(d>2\).
The proof combines a translation lift in the common-noise direction,
Fourier--Wasserstein inf- and sup-convolutions, viscosity comparison, and
particle derivative estimates uniform in \(N\).
\end{abstract}


\section{Introduction}
\label{sec:introduction}

Partially observed stochastic control problems are often made Markovian by
passing from the unobserved state to its conditional law given the observations.
This conditional law is the filter. It serves as the information state, and
the associated value function is defined on a space of probability measures.
For background on partially observed control and
filtering, see, for instance, \cite{BensoussanPartialObservation}; for recent
dynamic programming and Wasserstein HJB formulations based on randomized
filtering, see
\cite{BandiniCossoFuhrmanPham2018,BandiniCossoFuhrmanPham2019}. The present
paper studies such a controlled filtering problem in which the observation
noise is a common noise. The filter is therefore a measure-valued controlled
process, and the corresponding dynamic programming equation is a second-order
HJB equation on the Wasserstein space. We also draw on the dynamic programming theory for
McKean--Vlasov control with open-loop controls
\cite{BayraktarCossoPham2018,PhamWeiMkvDPP,DjetePossamaiTanDPP}.

Particle methods are fundamental in nonlinear filtering. Classical
sequential Monte Carlo and interacting-particle approximations approximate the
filter itself, or the Zakai and Kushner--Stratonovich equations; see, among
many others,
\cite{GordonSalmondSmith1993,Kitagawa1996MonteCarloFilter,
DoucetDeFreitasGordon2001,CrisanLyonsMeasureValued,
CrisanGainesLyonsZakai,CrisanLyonsKushnerStratonovitch,
CrisanDoucetParticleFiltering,DelMoral2004FeynmanKac}. Those works address a
different question from the one considered here: we approximate the
\emph{control} problem written in terms of the filter and estimate the error at
the level of the optimal value function. In this direction, finite-population approximation and
propagation-of-chaos results for McKean--Vlasov control provide the relevant
control-theoretic background
\cite{McKeanPropagation,SznitmanPropagation,LackerControlledMKV,djete,
DjetePossamaiTanDPP,FournierGuillin,GangboMayorgaSwiech,GermainPhamWarin,
CardaliaguetSouganidis,CardaliaguetDaudinJacksonSouganidis,
DaudinDelarueJackson,CardaliaguetJacksonMimikosSouganidis}.
To our knowledge, quantitative value-function rates for particle
approximations of controlled nonlinear filtering problems, in the
common-control Hamiltonian setting treated below, have not previously been
available.

Our starting point is the finite-population approximation developed by
Bouchard and Tan \cite{BouchardTan} for mean-field control problems with
controls adapted to the common-noise filtration. In their formulation, the
central planner in the \(N\)-particle system observes all particle states and
chooses a common signal. This produces a time-consistent finite-population
problem approximating the common-noise-adapted mean-field control problem, and
in a Markovian regime with affine drift, constant volatilities, and a separable
running reward, they obtain the optimal weak value-function rate \(N^{-1}\).
Their work also shows how this common-noise-adapted approximation applies to
classical partially observed control problems.

This paper establishes a value-function convergence rate for a more flexible
controlled nonlinear filtering problem. We work on the flat torus and
allow controlled drifts and nonseparable running rewards. In particular,
unlike the special setting of \cite{BouchardTan}, the running reward need not
decompose as
\(
\ell(t,\mu,a)=\ell_0(t,a)+F(\mu),
\)
where \(\mu\) denotes the state distribution and \(a\) the control. Our rate is
weaker than the optimal \(N^{-1}\) weak rate available under the affine-drift,
separable-reward assumptions of \cite{BouchardTan}, but the proof does not rely
on the explicit Gaussian-convolution representation that reduces the value
function to a finite-dimensional HJB equation in that special case. Instead, we
work directly with the Wasserstein equation and its \(N\)-particle
approximation.

The Hamiltonian is a central structural feature. Because the same action is
used for the whole particle system, the maximization is performed after
averaging over the particles. To express the limiting Hamiltonian, let
\(\eta\in\mathcal P_2(\mathbb T^d\times\mathbb R^d)\) be a probability measure
on pairs \((x,p)\), where \(x\) is the state variable and \(p\) is the
corresponding gradient variable in the HJB equation. Define the state marginal
of \(\eta\) by
\(
m_\eta(B):=\eta(B\times\mathbb R^d),
\, B\subseteq\mathbb T^d\ \text{Borel}.
\)
Then the relevant limiting Hamiltonian is
\begin{align}\label{eq:hamiltonian}
\mathfrak H(t,\eta)
=
\sup_{a\in A}
\int_{\mathbb T^d\times\mathbb R^d}
\left[
\ell(t,x,m_\eta,a)+b(t,x,m_\eta,a)\cdot p
\right]\eta(dx,dp).
\end{align}
For the \(N\)-particle problem, \(\eta\) is the empirical measure
\(N^{-1}\sum_{i=1}^N\delta_{(x_i,p_i)}\), and consequently
\(m_\eta=N^{-1}\sum_{i=1}^N\delta_{x_i}\). The supremum is outside the integral
because a single action must be used for every particle. This differs from the
pointwise Hamiltonian obtained by optimizing the action separately for each
particle and then averaging. Related common-control and intrinsic
viscosity-solution structures appear in
\cite{BayraktarCecchinChakrabortyRegimeSwitching,BurzoniIgnazioReppenSoner,SonerYanTorus}.

Our proof follows the PDE strategy used in quantitative McKean--Vlasov control
and finite-dimensional approximation results, including the adjoint and
particle-derivative estimates developed in work of Cardaliaguet, Daudin,
Jackson, Souganidis, and collaborators, and in our earlier work on
particle convergence rates and comparison for second-order Wasserstein PDEs;
see
\cite{CardaliaguetDaudinJacksonSouganidis,DaudinDelarueJackson,
CardaliaguetJacksonMimikosSouganidis,BayraktarEkrenZhangRate,
BayraktarEkrenZhangComparison,BayraktarEkrenHeZhangComparison,
BayraktarEkrenHeZhangCommonNoise}. In our framework, however, the optimization defining the Hamiltonian is
performed after integration over the population, and consequently the
particle-derivative estimates developed in these references cannot be applied
directly. We show that they nevertheless remain valid in our setting. The common noise
creates a second-order derivative in the measure variable. For constant common
noise, we remove this term by translating the measure and introducing an
additional torus variable. We then compare the translated limiting equation
with the translated \(N\)-particle equation by means of
Fourier--Wasserstein inf- and sup-convolutions. The regularity of the optimized
common-control Hamiltonian gives precisely the mean-field scaling needed to
differentiate the \(N\)-particle equation uniformly in \(N\).

The result yields a particle approximation of the value function for a
controlled nonlinear filtering problem, while preserving the Hamiltonian
structure of common-signal McKean--Vlasov control. It
complements the qualitative and optimal-rate results of \cite{BouchardTan}: we
obtain an explicit value-function rate under more general data, without
attempting to recover the optimal weak rate in the special structures where a
Taylor expansion of a smooth finite-dimensional reduction is available.

The rest of the paper is organized as follows.
Subsection~\ref{subsec:notation} fixes notation and the basic metric setting.
Section~\ref{sec:formulation} formulates the controlled filtering problem and
its centralized \(N\)-particle approximation.
Section~\ref{sec:main-result} states the main rate theorem and discusses the
Hamiltonian regularity assumption. Section~\ref{sec:proofs} proves the main
rate theorem. 

\subsection{Notation}
\label{subsec:notation}

Let $\N_+$ denote the set of strictly positive integers, and let $T$ be a
finite horizon with $0 < T < +\infty$.
We fix $d \in \N_+$ throughout the paper and work on the torus
$\T^d := \R^d/(2\pi\Z)^d$. 

For $N \in \N_+$, write $[N] := \{1,\ldots,N\}$.
For any Polish space $(E,d)$, let $\Pc(E)$ denote the set of all probability
measures on $E$.
For any $\rho \in \mathcal{P}(\mathbb T^d)$ and any bounded continuous
function $f:\T^d \to \mathbb R$, we write
$\langle f,\rho\rangle := \int f(x) \, \rho(dx)$.

Fix an integer \(k^\ast>d/2+2\). The Fourier--Wasserstein $2$-distance
$\rho_F$ between two probability measures
$\rho_1,\rho_2 \in \Pc(\T^d)$ is defined by
    \begin{equation*}
        \rho_F^2(\rho_1,\rho_2)
        ~ := ~
        \sum_{k \in \Z^d}\frac{|F_k(\rho_1-\rho_2)|^2}{(1 + |k|^2)^{k^\ast}},
    \end{equation*}
where, for \(k\in\Z^d\),
$f_k(x) := (2\pi)^{-\frac{d}{2}}e^{i k \cdot x}$ and
$F_k(\rho) := \langle f_k, \rho \rangle$. For any complex number $z$, we
denote its conjugate by $z^*$.

For $\theta_1=(t_1,\mu_1,z_1)$ and
$\theta_2=(t_2,\mu_2,z_2)$ in
\(\Theta:=[0,T] \x \Pc(\T^d) \x \T^d\), define
    \[
        d_F^2(\theta_1,\theta_2)=|t_1-t_2|^2+|z_1-z_2|^2+\rho_F^2(\mu_1,\mu_2).
    \]
We denote by $\S_d$ the space of all real $d \x d$ matrices, equipped with
the Frobenius norm $|\cdot|$.

Set \(E:=\mathbb T^d\times\mathbb R^d\), and let
\(\mathcal P_2(E)\) denote the set of probability measures on \(E\) with
finite second moment. Let
\(\pi_x:E\longrightarrow\mathbb T^d\), \(\pi_x(x,p):=x\), be the projection
onto the state variable. For \(\eta\in\mathcal P_2(E)\), we write
\(m_\eta:=(\pi_x)_\sharp\eta\in\mathcal P(\mathbb T^d)\) for its spatial
marginal.

\section{Problem Formulation}
\label{sec:formulation}
We start from the controlled filtering problem and then introduce the
centralized finite-particle approximation. The value function in
Subsection~\ref{subsec:filtering-problem} is the object we want to compute.
Subsection~\ref{subsec:finite-particle-problem} defines the finite-dimensional
centralized control problem that approximates it; the main result,
Theorem~\ref{thm:rate}, quantifies this approximation.

\subsection{Controlled filtering problem on the torus}
\label{subsec:filtering-problem}

Let
$q:\mathbb R^d\longrightarrow \mathbb T^d$
denote the canonical quotient map. Let
\((\Omega,\mathcal F,\mathbb F,\mathbb P)\) be a complete filtered
probability space satisfying the usual conditions and supporting an
\(\mathbb R^m\)-valued Brownian motion \(W\) and an
\(\mathbb R^{m_0}\)-valued Brownian motion \(B\), which are independent.
Let the control space \((A,\rho)\) be a Polish space and fix \(a_0\in A\).

Fix an initial time \(t\in[0,T]\) and an initial belief
$\mu\in\mathcal P(\mathbb T^d).$
We assume that the probability space
is sufficiently rich to support an \(\mathcal F_t\)-measurable
\(\mathbb T^d\)-valued random variable \(\xi^{t,\mu}\) such that
$\mathcal L^{\mathbb P}(\xi^{t,\mu})=\mu,$
and such that \(\xi^{t,\mu}\) is independent of the future increments
$\bigl(W_s-W_t,B_s-B_t\bigr)_{t\leq s\leq T}.$

The process \(B\) represents the observable noise, whereas the torus-valued
state and the idiosyncratic noise \(W\) are not observed.
Starting from time \(t\), define the observation filtration
\[
\mathcal F_s^t
:=
\sigma\bigl(B_r-B_t:t\leq r\leq s\bigr),
\qquad s\in[t,T],
\]
and let \(\mathbb F^t=(\mathcal F_s^t)_{t\leq s\leq T}\). The information
available before time \(t\) is summarized by the initial belief \(\mu\),
while \(\mathbb F^t\) records the observations received after time \(t\).

The set of admissible controls starting from time \(t\) is
\begin{equation*}
\mathcal A_t
:=
\left\{
\alpha:[t,T]\times\Omega\longrightarrow A:
\alpha \text{ is }\mathbb F^t\text{-predictable},
\displaystyle
\mathbb E\left[
\int_t^T\rho(\alpha_s,a_0)^2\,ds
\right]<\infty
\right\}.
\end{equation*}

Let the drift coefficient be
\(b:[0,T]\times\mathbb T^d\times\mathcal P(\mathbb T^d)\times A
\longrightarrow\mathbb R^d\), and let
$\sigma\in\mathbb R^{d\times m}$ and
$\sigma_0\in\mathbb R^{d\times m_0}$ be constants.
The lift of \(b\) to \(\mathbb R^d\) is \(\mathbb Z^d\)-periodic in the
state variable. For each \(\alpha\in\mathcal A_t\), the controlled
torus-valued state
process \(X^{t,\mu,\alpha}\) is defined as follows. Choose an
\(\mathbb R^d\)-valued measurable lift \(\widehat \xi^{t,\mu}\) of
\(\xi^{t,\mu}\), namely
\(
q(\widehat \xi^{t,\mu})=\xi^{t,\mu}.
\)
Let \(\widehat X^{t,\mu,\alpha}\) solve the lifted SDE
\begin{equation*}
\begin{aligned}
\widehat X_s^{t,\mu,\alpha}
={}&
\widehat \xi^{t,\mu}
+\int_t^s
b\bigl(r,q(\widehat X_r^{t,\mu,\alpha}),
       \pi_r^{t,\mu,\alpha},\alpha_r\bigr)\,dr
+\int_t^s \sigma \,dW_r+\int_t^s
\sigma_0\,dB_r,
\qquad s\in[t,T].
\end{aligned}
\end{equation*}
The torus-valued state is then
\(
X_s^{t,\mu,\alpha}
:=
q\bigl(\widehat X_s^{t,\mu,\alpha}\bigr),
\, s\in[t,T].
\)
Because the coefficients are periodic in the state variable, the process
\(X^{t,\mu,\alpha}\) does not depend on the particular lift
\(\widehat \xi^{t,\mu}\).

The conditional distribution, or filter, of the unobserved torus-valued
state given the observations is
\begin{equation*}
\pi_s^{t,\mu,\alpha}
:=
\mathcal L^{\mathbb P}
\bigl(X_s^{t,\mu,\alpha}\mid\mathcal F_s^t\bigr)
\in\mathcal P(\mathbb T^d),
\qquad s\in[t,T].
\end{equation*}
In particular,
$
\pi_t^{t,\mu,\alpha}=\mu,
\, \,  \mathbb P\text{-a.s.}
$

Let the running and terminal reward functions be
\[
\ell:[0,T]\times\mathbb T^d\times \Pc(\T^d) \x A\longrightarrow\mathbb R
\quad\text{and}\quad
G:\Pc(\mathbb T^d)\longrightarrow\mathbb R.
\]
The value function of the control problem started from \((t,\mu)\) is
therefore
\begin{equation*}
V(t,\mu)
:=
\sup_{\alpha\in\mathcal A_t}J(t,\mu,\alpha), \quad J(t,\mu,\alpha)
:=
\mathbb E\left[
\int_t^T
\langle \ell\bigl(s,\cdot, \pi_s^{t,\mu,\alpha},\alpha_s\bigr),\pi_s^{t,\mu,\alpha}\rangle\,ds
+
G(\pi_T^{t,\mu,\alpha})
\right].
\end{equation*}
In particular, the terminal condition is
\(
V(T,\mu)=G(\mu),
\, \mu\in\mathcal P(\mathbb T^d).
\)

\subsection{Centralized finite-particle control problem}
\label{subsec:finite-particle-problem}

Fix \(N\geq 1\). Let
\((\Omega^{N},\mathcal F^{N},\mathbb P^{N})\) be a complete probability space
supporting an \(\mathbb R^{m_0}\)-valued Brownian motion \(B\), and
independent \(\mathbb R^m\)-valued Brownian motions
\((W^k)_{k=1}^{N}\). The Brownian motion \(B\) represents the common noise,
while \(W^k\) is the idiosyncratic noise of the \(k\)-th particle.

For \(t\in[0,T]\), we denote by
\(\mathbb F^{N,t}=(\mathcal F^{N,t}_{s})_{t\leq s\leq T}\) the filtration
generated by the increments of these processes after time \(t\), namely
\[
    \mathcal F^{N,t}_{s}
    :=
    \sigma\bigl(
        B_r-B_t,\,
        W^1_r-W^1_t,\ldots,W^N_r-W^N_t:
        t\leq r\leq s
    \bigr),
    \qquad s\in[t,T].
\]

The set of admissible centralized controls starting from time \(t\) is defined by
\[
    \mathcal A_{N,t}
    :=
    \left\{
    \alpha:[t,T]\times\Omega^{N}\to A :
    \alpha \text{ is } \mathbb F^{N,t}\text{-predictable},
    \displaystyle
    \mathbb E^{\mathbb P^{N}}
    \left[
        \int_t^T \rho(\alpha_s,a_0)^2\,ds
    \right]
    <\infty
    \right\}.
\]
Let
    $x=(x_1,\ldots,x_N)\in(\mathbb T^d)^N$
be the initial configuration. For each \(k=1,\ldots,N\), choose a lift
\(\widehat x_k\in\mathbb R^d\) such that
\(
    q(\widehat x_k)=x_k.
\)

For a given \(\alpha\in\mathcal A_{N,t}\), the state process of the
\(k\)-th particle is defined by
\[
    X^{N,k,t,x,\alpha}_r
    :=
    q\bigl(\widehat X^{N,k,t,x,\alpha}_r\bigr),
    \qquad r\in[t,T],
\]
where for $k=1,\ldots,N$, the lifted process \(\widehat X^{N,k,t,x,\alpha}\) solves
\[
\begin{aligned}
    \widehat X^{N,k,t,x,\alpha}_r
    &=
    \widehat x_k
    +
    \int_t^r
        b\bigl(
            s,
            q(\widehat X^{N,k,t,x,\alpha}_s),
            \mu^{N,t,x,\alpha}_s,
            \alpha_s
        \bigr)\,ds
    +
    \int_t^r
        \sigma\,dW^k_s
    +
    \int_t^r
        \sigma_0\,dB_s,
    \qquad r\in[t,T].
\end{aligned}
\]
The empirical distribution of the torus-valued particle system is given by
\[
    \mu^{N,t,x,\alpha}_r
    :=
    \frac{1}{N}\sum_{k=1}^{N}
    \delta_{X^{N,k,t,x,\alpha}_r}
    \in\mathcal P(\mathbb T^d),
    \qquad r\in[t,T].
\]
Since the coefficients are periodic in the state variable, the torus-valued
processes \(X^{N,k,t,x,\alpha}\) do not depend on the particular choice of
the lifts \(\widehat x_k\).

The associated centralized finite-population control problem is
\begin{align}\label{eq:Nvalue}
    V^N(t,x)
    :=
    \sup_{\alpha\in\mathcal A_{N,t}}
    J_N(t,x,\alpha),
    \qquad (t,x)\in[0,T]\times(\mathbb T^d)^N,
\end{align}
where
\[
    J_N(t,x,\alpha)
    :=
    \mathbb E^{\mathbb P^N}
    \left[
        \int_t^T
        \left\langle
            \ell\bigl(s,\cdot,\mu^{N,t,x,\alpha}_s,\alpha_s\bigr),
            \mu^{N,t,x,\alpha}_s
        \right\rangle\,ds
        +
        G(\mu^{N,t,x,\alpha}_T)
    \right].
\]

\section{Main result}
\label{sec:main-result}
We now state the assumptions needed for our results.
\begin{assumption}[Smooth data and optimized Hamiltonian]\label{ass:rate}
Let \(r:=k^\ast+2\).

\noindent $\mathrm{(i)}$\textbf{Regularity of the coefficients.}
    The maps
    \[
        b:[0,T]\times\mathbb T^d\times\mathcal P(\mathbb T^d)
        \times A\longrightarrow\mathbb R^d,\quad
        \ell:[0,T]\times\mathbb T^d\times\mathcal P(\mathbb T^d)
        \times A\longrightarrow\mathbb R
    \]
    are continuous in \(t\) and belong, uniformly in \(t\), to
    \(C_b^r\) jointly in \((x,\mu,a)\), where differentiability with
    respect to \(\mu\) is understood in the sense of Lions. The derivatives
    with respect to the state variable \(x\) are uniformly bounded.


    \vspace{0.5em}

    \noindent $\mathrm{(ii)}$\textbf{Regularity of the terminal functional.}
    The functional
        $G:\mathcal P(\mathbb T^d)\longrightarrow\mathbb R$
    belongs to \(C_b^r\) in the sense of Lions. Thus, for every
    \(j\geq1\), the derivatives
    $
        D_{\boldsymbol y}^{\boldsymbol\gamma}
        \partial_\mu^jG(\mu)(y_1,\ldots,y_j)
    $
    exist, are jointly continuous, and are uniformly bounded whenever
        $j+\sum_{q=1}^j|\gamma_q|\leq r.$

    \vspace{0.5em}

    \noindent $\mathrm{(iii)}$\textbf{Uniform ellipticity.}
    The constant matrix \(\sigma\) satisfies, for some \(\lambda>0\),
    $
        \sigma\sigma^\top\geq\lambda I_d.
    $ 

\vspace{0.5em}

    \noindent $\mathrm{(iv)}$
    \textbf{Hamiltonian regularity.}
 The Hamiltonian $\mathfrak{H}$ defined in \eqref{eq:hamiltonian}
is continuous in \(t\) and \(C^r\) in the measure variable, in the Lions
sense. Its derivatives, up to total order
\(r\), are jointly continuous in all their variables.
\end{assumption}

\begin{thm}[Particle approximation rate]\label{thm:rate}
    Under Assumption~\ref{ass:rate}, there exists a constant \(C>0\),
    independent of \(N\), such that
    \begin{align*}
        \sup_{(t,x) \in [0,T] \x (\T^d)^N} \big| V^N(t,x) - V(t,\mu^N_x) \big|
        \le
         \left\{
    \begin{aligned}
        &CN^{-\frac{1}{6}}, &~\mbox{if}~ d = 1,
        \\
        &CN^{-\frac{1}{6}}\ln^{\frac{1}{3}}(N), &~\mbox{if}~ d = 2,
        \\
        &CN^{-\frac{1}{3d}}, &~\mbox{if}~ d > 2.
    \end{aligned}
    \right.
    \end{align*}
\end{thm}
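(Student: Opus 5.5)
We follow the PDE route announced in the introduction: identify both value functions as viscosity solutions of HJB equations, remove the common-noise second-order measure term by a translation lift, and compare the two translated equations through Fourier--Wasserstein inf- and sup-convolutions. The proof rests on particle derivative bounds for \(V^N\) that are uniform in \(N\).

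\emph{Step 1: the two equations.} By dynamic programming, \(V\) is a viscosity solution on \([0,T)\times\mathcal P(\mathbb T^d)\) of
\[
\partial_tV+\mathfrak H\bigl(t,(\mathrm{id},\partial_\mu V)_\sharp\mu\bigr)+\tfrac12\int \mathrm{tr}\bigl((\sigma\sigma^\top+\sigma_0\sigma_0^\top)\partial_x\partial_\mu V\bigr)d\mu+\tfrac12\iint\mathrm{tr}\bigl(\sigma_0\sigma_0^\top\partial^2_{\mu\mu}V\bigr)d\mu\,d\mu=0 ,
\]
with \(V(T,\cdot)=G\). Here the Hamiltonian is evaluated at \(\eta=(\mathrm{id},\partial_\mu V(t,\mu)(\cdot))_\sharp\mu\). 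The function \(V^N\) solves the finite-dimensional HJB with Hamiltonian \(\mathfrak H(t,N^{-1}\sum_i\delta_{(x_i,N\partial_{x_i}V^N)})\). It has diffusion \(\frac12\sum_i\mathrm{tr}(\sigma\sigma^\top\partial^2_{x_ix_i})\) plus the common-noise term \(\frac12\sum_{i,j}\mathrm{tr}(\sigma_0\sigma_0^\top\partial^2_{x_ix_j})\).

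\emph{Step 2: translation lift.} Set \(\tilde V(t,\mu,z):=V(t,(\tau_z)_\sharp\mu)\) with \(z\in\mathbb T^d\), and let \(z\) evolve along \(\sigma_0 B\). The common-noise term then becomes a uniformly parabolic operator \(\frac12\mathrm{tr}(\sigma_0\sigma_0^\top D_z^2)\) in the extra torus variable. This removes the \(\partial^2_{\mu\mu}\) term and places us on \(\Theta\) with the metric \(d_F\). We do the same for \(V^N\), translating all particles by \(z\).

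\emph{Step 3: uniform particle estimates.} This is the main obstacle. We need \(|\partial_{x_i}V^N|\le C/N\), \(|\partial^2_{x_ix_j}V^N|\le C/N^2\) for \(i\ne j\), and \(|\partial^2_{x_ix_i}V^N|\le C/N\), together with higher derivatives up to order \(r\) with the mean-field scaling. We obtain them by differentiating the \(N\)-particle HJB and running adjoint/maximum-principle arguments as in \cite{CardaliaguetDaudinJacksonSouganidis,BayraktarEkrenZhangRate}. Because the supremum in \(\mathfrak H\) sits outside the integral, the Hamiltonian is not a sum of particle Hamiltonians. Its derivative in \(p_i\) must be computed from Assumption~\ref{ass:rate}(iv) via the Lions derivatives \(\partial_\eta\mathfrak H\) at empirical measures. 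These carry the \(1/N\) factor, and the \(C^r\) regularity of \(\mathfrak H\) gives bounds on the second and mixed derivatives of the \(p\)-dependence with the correct \(N\)-scaling. The ellipticity in (iii) handles the linearized equations. This step also yields Lipschitz continuity of \(x\mapsto V^N\) in \(\rho_F\), uniformly in \(N\), using \(k^\ast>d/2+2\).

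\emph{Step 4: comparison with convolutions.} We regularize \(\tilde V\) by the Fourier--Wasserstein sup-convolution \(\tilde V^\epsilon(\theta)=\sup_{\theta'}\{\tilde V(\theta')-\frac1{2\epsilon}d_F^2(\theta,\theta')\}\), and define the inf-convolution similarly. Because \(\rho_F^2\) is a quadratic form in Fourier coefficients, it is smooth along empirical measures. The restriction of \(\tilde V^\epsilon\) to empirical measures is then a viscosity subsolution of the \(N\)-particle equation up to an error \(C(\epsilon^{-1}N^{-1}+\ldots)\). The Hamiltonian mismatch between \(\eta\) and its empirical counterpart is controlled by Lipschitz continuity of \(\mathfrak H\) together with the Step 3 bounds. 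Comparison for the finite-dimensional equation gives \(V(t,\mu_x^N)-V^N\le C(\sqrt\epsilon+\text{err}_N/\epsilon)\), using the \(\rho_F\)-Lipschitz property of \(V\) to control \(|\tilde V^\epsilon-\tilde V|\le C\epsilon\). For the reverse inequality we use the inf-convolution of the particle value, lifted to measures through the regularity from Step 3, and the comparison principle on Wasserstein space from \cite{BayraktarEkrenHeZhangComparison}. The error term involves \(\mathbb E\rho_F\) or Wasserstein distances between empirical measures of i.i.d.\ samples, giving rates \(N^{-1/2}\), \(N^{-1/2}\ln N\), and \(N^{-1/d}\) (Fournier--Guillin). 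Optimizing \(\epsilon\) against these rates produces the exponents \(1/6\), \(1/6\) with a \(\ln^{1/3}\) factor, and \(1/(3d)\), giving the three cases of Theorem~\ref{thm:rate}.
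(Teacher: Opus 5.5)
Your Steps 1--3 essentially match the paper; Proposition~\ref{prop:particle-regularity} is your Step 3, with weaker claimed bounds. Step 4, however, has two genuine gaps. \emph{First, your two halves prove the same inequality.} The inf-convolution \(\Vb^N_\eps\) of the lifted particle value is an approximate \emph{supersolution} of the translated limit equation. Comparison with \(\Vb\) therefore gives \(V(t,\mu^N_x)\le\Vb^N_\eps(t,\mu^N_x,0)+\mathrm{err}\le V^N(t,x)+\mathrm{err}\), which is exactly what your finite-dimensional argument with the sup-convolution of \(\tilde V\) already produced. The bound \(V^N(t,x)\le V(t,\mu^N_x)+\mathrm{err}\) is never obtained. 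It needs the sup-convolution \(\Vb^{N,\eps}\), an approximate subsolution compared with \(\Vb\) as a supersolution, as in the paper. Alternatively, it needs the inf-convolution of \(\tilde V\) restricted to empirical measures, viewed as an approximate supersolution of the particle equation.

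\emph{Second, your finite-dimensional half rests on ``the \(\rho_F\)-Lipschitz property of \(V\)'', which nothing in your outline supplies, and it is essential.} Let \((t',\mu',z')\) be the maximizer defining \(\tilde V^\eps(t_0,\mu^N_{x_0},z_0)\). You need \(r=\rho_F(\mu^N_{x_0},\mu')\le C\eps\), so that the transferred gradient, of size \(r/\eps\), stays bounded and the mismatch terms \(Cr(1+r/\eps)+Cr^2/\eps\) are small. The localization \(r^2/(2\eps)\le\tilde V(t',\mu',z')-\tilde V(t',\mu^N_{x_0},z')\) requires a \(\rho_F\)-modulus of \(V\). A \(W_1\)-Lipschitz bound from coupling does not help, since \(\rho_F\) is weaker than \(W_1\). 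Your Step 3 concerns \(V^N\), not \(V\). Even for \(V^N\), the uniform statement is Lipschitz only up to an additive \(\alpha(N)\) (Proposition~\ref{prop:vhat-empirical}), not exact Lipschitz as you claim. Passing to the limit in Lemma~\ref{lem:vhat-lipschitz} would presuppose \(V^N\to V\), which is what is being proved. A direct proof would need an \(H^{-k^\ast}\)-stability estimate for the controlled filter, uniform over controls, which you do not give.

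The paper avoids any regularity of \(V\) by regularizing only \(\Vb^N\), on both sides, over empirical configurations, and then invoking comparison for the limit equation (Proposition~\ref{prop:comparison-limit}). In that route, \(\alpha(N)\) enters through the localization \(r\le C(\eps+\alpha(N)+\sqrt{\eps\alpha(N)})\) of Lemma~\ref{lem:Difference_estimate}. The residual is \(C(\eps^{1/3}+\eps+1/(N\eps))\): the \(\eps^{1/3}\) comes from the time modulus and the \(1/(N\eps)\) from the \(N^{-2}D^2_{\mu\mu}\rho_F^2\) correction. Taking \(\eps=\alpha(N)\) then gives the rate. Your bound \(C(\sqrt\eps+\mathrm{err}_N/\eps)\) is not derived from any such computation, so the final exponents are asserted rather than proved.
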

\begin{remark}[Comparison with the optimal weak rate in \cite{BouchardTan}]
The estimate of order $N^{-1}$ obtained in
\cite[Theorem~4.5]{BouchardTan} should be viewed as an
optimal weak value-function rate under the special
structure imposed in
\cite[Assumption~4.3]{BouchardTan}. More precisely, both
$\sigma$ and $\sigma_0$ are constant, the controlled drift satisfies
\(
    b(t,x,\mu,a)\equiv a,
\)
and the running reward is of the form
\(
    \ell(t,\mu,a)=\ell_0(t,a)+F(\mu).
\)
Under these assumptions, for a problem starting from $(t,\mu)$, the
conditional law can be represented as
\[
    \mu_s^\alpha
    =
    \tau_{Z_s^\alpha\#}
    \bigl(\mu * \mathcal{N}(0,(s-t)\sigma\sigma^\top)\bigr),
    \qquad s\in[t,T],
\]
where $\tau_z(x)=x+z$ and \(Z^\alpha\) is a finite-dimensional
controlled common translation. Consequently, the mean-field control
problem reduces to a classical finite-dimensional HJB equation,
parametrized by the initial law \(\mu\). This reduction is used to
obtain the required \(C^2\) regularity with respect to the measure
parameter and, subsequently, the \(N^{-1}\) convergence rate. Thus, constant
volatility and the special structure of the drift are central to the
proof in \cite{BouchardTan}, although these assumptions may not be necessary
for the validity of an \(N^{-1}\) weak rate in greater generality.

The present result is complementary. It avoids the above reduction and works
directly with the PDE in the Wasserstein space. It also allows general nonseparable
running rewards and controlled drifts through the regularity assumption on the
optimized common-control Hamiltonian.
Both \(\sigma\) and \(\sigma_0\) remain constant throughout the theorem and
its proof.
\end{remark}

\begin{remark}[State- and measure-dependent common noise in dimension one]
The assumption that \(\sigma_0\) is constant is used in the present
proof through the translation lift
\[
    (t,\mu,z)\longmapsto V\bigl(t,(\mathrm{Id}+z)_\#\mu\bigr),
\]
which converts the constant common-noise Hessian into an ordinary
second derivative in the variable \(z\). In one space dimension, with
scalar common noise, this translation lift can be replaced by the
measure-dependent nonlinear flow introduced in
\cite{BayraktarEkrenHeZhangCommonNoise}. Namely, for
\(\sigma_0=\sigma_0(x,\mu)\), let \(\psi\) solve
\[
    \partial_z\psi(x,\mu,z)
    =
    \sigma_0\Bigl(
        \psi(x,\mu,z),
        \psi(\cdot,\mu,z)_\#\mu
    \Bigr),
    \qquad
    \psi(x,\mu,0)=x,
\]
and define
\[
    \overline V(t,\mu,z)
    :=
    V\bigl(t,\psi(\cdot,\mu,z)_\#\mu\bigr).
\]
Under the periodic analogues of the regularity assumptions in
\cite[Assumption~2.1]{BayraktarEkrenHeZhangCommonNoise}, this flow
transforms the general common-noise Hessian generated by
\(\sigma_0(x,\mu)\) into an ordinary second derivative
\(\partial_{zz}^2\overline V\), with the induced first-order correction
absorbed into the transformed Hamiltonian.

Applying the same flow to the empirical measures in the \(N\)-particle
equation and using the flow derivative estimates, the additional transformed
coefficients and correction terms retain the mean-field scaling used in the
particle residual estimate. Consequently, provided that the transformed
Hamiltonian satisfies the structural conditions of
\cite[Assumption~3.2]{BayraktarEkrenHeZhangCommonNoise}, the comparison
principle in \cite[Theorem~3.1]{BayraktarEkrenHeZhangCommonNoise} allows the
argument of Theorem~\ref{thm:rate} to be extended to state- and
measure-dependent common-noise coefficients in dimension one.
\end{remark}

\subsection{Discussion of the Hamiltonian regularity assumption}
\label{sec:discussion-H}

We discuss the regularity assumption imposed on the optimized
Hamiltonian. Since the state space is the torus \(\mathbb T^d\), all functions
of the state variable are understood as periodic functions on \(\mathbb R^d\).
Derivatives with respect to \(x\in\mathbb T^d\) are computed using periodic
lifts. Assumption~\ref{ass:rate}~(iv) requires joint continuity of the
Hamiltonian derivatives. On sets where the \(p\)-variables and their
evaluation points are restricted to a fixed bounded set, compactness makes
these derivatives bounded and uniformly continuous. The assumption is
motivated by
\cite{DaudinJacksonSeeger,BayraktarEkrenZhangRate}.

In the centralized finite-population problem considered above, the control is
common to all particles. Therefore the optimized Hamiltonian is not, in
general, the average of pointwise optimized Hamiltonians. More precisely, the
relevant Hamiltonian is
\[
\sup_{a\in A}
\frac1N
\sum_{i=1}^N
\left[
\ell(t,x_i,\mu_x^N,a)
+
b(t,x_i,\mu_x^N,a)\cdot p_i
\right],
\]
rather than
\[
\frac1N
\sum_{i=1}^N
\sup_{a\in A}
\left[
\ell(t,x_i,\mu_x^N,a)
+
b(t,x_i,\mu_x^N,a)\cdot p_i
\right].
\]
This distinction is important because the same action \(a\) is chosen for all
particles.

It is therefore convenient to encode this centralized structure through an
optimized Hamiltonian defined on probability measures over the product space
\(E\). Recalling the notation in Subsection~\ref{subsec:notation}, for
\(\eta\in\mathcal P_2(E)\), we define
\[
\mathfrak H(t,\eta)
:=
\sup_{a\in A}
\int_{\mathbb T^d\times\mathbb R^d}
\left[
\ell(t,x,m_\eta,a)+b(t,x,m_\eta,a)\cdot p
\right]\eta(dx,dp).
\]
For
\(x=(x_1,\ldots,x_N)\in(\mathbb T^d)^N\) and
\(p=(p_1,\ldots,p_N)\in(\mathbb R^d)^N\), set
\(
\eta_{x,p}^N
:=
\frac1N
\sum_{i=1}^N
\delta_{(x_i,p_i)}.
\)
Then the Hamiltonian appearing in the \(N\)-particle equation is
\[
\mathfrak H^N(t,x,p)
:=
\mathfrak H(t,\eta_{x,p}^N).
\]
Thus the \(N\)-particle HJB equation can be written, up to the sign convention
used above, as
\[
-\partial_t v^N(t,x)
-
\mathcal L^N v^N(t,x)
-
\mathfrak H^N
\left(
t,x,
N D_{x_1}v^N(t,x),
\ldots,
N D_{x_N}v^N(t,x)
\right)
=0,
\]
where \(\mathcal L^N\) denotes the second-order operator generated by the
idiosyncratic and common noises.

Assumption~\ref{ass:rate}~(iv) implies the following bounds on the empirical
derivatives of \(\mathfrak H^N\). Let
$
z_i=(x_i,p_i)\in E.
$
For every \(R>0\), every \(1\le q\le r\), and every choice of indices
\(i_1,\ldots,i_q\in\{1,\ldots,N\}\), there exists a constant \(C_R\),
independent of \(N\), such that
\[
\left|
D_{z_{i_1}}\cdots D_{z_{i_q}}
\mathfrak H^N(t,z_1,\ldots,z_N)
\right|
\le
C_R
N^{-\#\{i_1,\ldots,i_q\}},
\qquad
\max_{1\le i\le N}|p_i|\le R.
\]
In particular,
\[
\left|
D_{z_i}\mathfrak H^N
\right|
\le
\frac{C_R}{N},
\qquad
\left|
D^2_{z_i z_i}\mathfrak H^N
\right|
\le
\frac{C_R}{N},
\qquad
\left|
D^2_{z_i z_j}\mathfrak H^N
\right|
\le
\frac{C_R}{N^2},
\qquad i\neq j.
\]
These estimates provide precisely the mean-field scaling needed when
differentiating the \(N\)-particle HJB equation. In particular, they are the
key ingredient in showing that the particle-derivative estimates of
\cite{CardaliaguetDaudinJacksonSouganidis,DaudinDelarueJackson}, which do not
directly apply to the optimized Hamiltonian \(\mathfrak H^N\), remain valid in
our framework; the resulting derivative bounds for \(v^N\) are established in
Proposition~\ref{prop:particle-regularity}. 

The assumption is imposed directly on the optimized Hamiltonian. It avoids
requiring uniqueness of the maximizing control. Such a uniqueness assumption is
often unnecessarily strong. Indeed, what is needed in the proof of the
estimates for \(v^N\) is the smoothness of the optimized Hamiltonian itself,
not the smoothness or uniqueness of the optimizer.

The functional $\mathfrak H$ also governs the limiting equation. For
\(\mu\in\mathcal P(\mathbb T^d)\) and a continuous map
\(\phi:\mathbb T^d\longrightarrow\mathbb R^d\), consider the graph measure
\[
\eta_{\mu,\phi}:=(I_d,\phi)_\sharp\mu\in\mathcal P_2(E),
\]
the image of \(\mu\) under \(x\longmapsto(x,\phi(x))\). Since
\(\pi_x\circ(I_d,\phi)=I_d\), its spatial marginal is
\(m_{\eta_{\mu,\phi}}=\mu\), and therefore
\[
\mathfrak H(t,\eta_{\mu,\phi})
=
\sup_{a\in A}
\int_{\mathbb T^d}
\bigl[
\ell(t,x,\mu,a)+b(t,x,\mu,a)\cdot\phi(x)
\bigr]\,\mu(dx).
\]
Taking \(\phi=D_\mu V(t,\mu,\cdot)\), the supremum in
\eqref{eq:original_limit_PDE} is exactly
\(\mathfrak H(t,\eta_{\mu,\phi})\). Hence the single functional
\(\mathfrak H\) is evaluated along graph measures in the limiting equation
and along the empirical measures \(\eta^N_{x,p}\), with
\(p_i=ND_{x_i}v^N(t,x)\), in the \(N\)-particle equation. The regularity
required in Assumption~\ref{ass:rate}~(iv) therefore bears on both equations
at once.

\begin{example}[A bounded periodic-control model]
Let $A=\mathbb{R}$ and fix a constant $c>0$. Let
\[
f,g:[0,T]\times\mathbb{T}^d\longrightarrow\mathbb{R},
\qquad
b_0,B:[0,T]\times\mathbb{T}^d\longrightarrow\mathbb{R}^d,
\]
and let
\(
K:[0,T]\times\mathbb{T}^d\times\mathbb{T}^d
\longrightarrow\mathbb{R}
\)
be symmetric in its spatial variables:
\(
K(t,x,y)=K(t,y,x)
\).
Assume that these functions are continuous in $t$, smooth and periodic in
their spatial variables, and that all spatial derivatives up to order $r$ are
bounded uniformly in $t$. Define
\[
\ell(t,x,m,a)
=
f(t,x)
+
\frac{1}{2}
\int_{\mathbb{T}^d}
K(t,x,y)\,m(dy)
+
c\cos a
+
g(t,x)\sin a,
\]
and
\(
b(t,x,m,a)
=
b_0(t,x)+B(t,x)\sin a.
\)
Since $\sin a$, $\cos a$, and all of their derivatives are bounded, the maps
$b$ and $\ell$ satisfy Assumption~\ref{ass:rate}~(i). In particular, the state
derivatives are uniformly bounded in $a$; for example,
\[
D_x b(t,x,m,a)
=
D_x b_0(t,x)+D_x B(t,x)\sin a.
\]

For $(x,p)\in\mathbb{T}^d\times\mathbb{R}^d$, set
\[
q(t,x,p)
:=
g(t,x)+B(t,x)\cdot p,
\]
and, for $\eta\in\mathcal{P}_2(E)$, define
\[
M(t,\eta)
:=
\int_{\mathbb{T}^d\times\mathbb{R}^d}
q(t,x,p)\,\eta(dx,dp).
\]
Using the symmetry of $K$, the Hamiltonian becomes
\[
\begin{aligned}
\mathfrak H(t,\eta)
={}&
\int_{\mathbb{T}^d\times\mathbb{R}^d}
\left[
f(t,x)+b_0(t,x)\cdot p
\right]
\eta(dx,dp)
\\
&+
\frac{1}{2}
\iint_{(\mathbb{T}^d\times\mathbb{R}^d)^2}
K(t,x,x')
\,\eta(dx,dp)\eta(dx',dp')
\\
&+
\sup_{a\in\mathbb{R}}
\left[
c\cos a+M(t,\eta)\sin a
\right].
\end{aligned}
\]
The last optimization is explicit:
\[
\sup_{a\in\mathbb{R}}
\left[
c\cos a+M\sin a
\right]
=
\sqrt{c^2+M^2}.
\]
Consequently, if
\(
\Psi(s):=\sqrt{c^2+s^2},
\)
then
\[
\begin{aligned}
\mathfrak H(t,\eta)
={}&
\int_{\mathbb{T}^d\times\mathbb{R}^d}
\left[
f(t,x)+b_0(t,x)\cdot p
\right]
\eta(dx,dp)
\\
&+
\frac{1}{2}
\iint_{(\mathbb{T}^d\times\mathbb{R}^d)^2}
K(t,x,x')
\,\eta(dx,dp)\eta(dx',dp')
+
\Psi\left(M(t,\eta)\right).
\end{aligned}
\]
The condition $c>0$ ensures that $\Psi$ is smooth on $\mathbb{R}$. Notice
also that the maximizing action is not unique as an element of
$\mathbb{R}$, since it is determined only modulo $2\pi$, whereas the
optimized Hamiltonian is smooth. This illustrates why
Assumption~\ref{ass:rate}~(iv) is
formulated directly in terms of \(\mathfrak H\).

Writing $z=(x,p)$ and $z'=(x',p')$, the first variational derivative is
\[
\begin{aligned}
\frac{\delta\mathfrak H}{\delta\eta}(t,\eta,z)
={}&
f(t,x)+b_0(t,x)\cdot p
+
\int_{\mathbb{T}^d\times\mathbb{R}^d}
K(t,x,x')\,\eta(dx',dp')
\\
&+
\Psi'\left(M(t,\eta)\right)q(t,x,p),
\end{aligned}
\]
and the second variational derivative is
\[
\frac{\delta^2\mathfrak H}{\delta\eta^2}
(t,\eta,z,z')
=
K(t,x,x')
+
\Psi''\left(M(t,\eta)\right)
q(t,x,p)q(t,x',p').
\]
For every $3\leq j\leq r$, writing \(z_k=(x_k,p_k)\),
\[
\frac{\delta^j\mathfrak H}{\delta\eta^j}
(t,\eta,z_1,\ldots,z_j)
=
\Psi^{(j)}\left(M(t,\eta)\right)
\prod_{k=1}^j q(t,x_k,p_k).
\]
Here
\(
\Psi'(s)
=
\frac{s}{\sqrt{c^2+s^2}},
\,
\Psi''(s)
=
\frac{c^2}{(c^2+s^2)^{3/2}},
\)
and all derivatives of $\Psi$ of positive order are bounded.

Fix $R>0$ and suppose that
\(
\eta\left(\mathbb{T}^d\times B_0(R)\right)=1.
\)
Then $q(t,x,p)$ and $M(t,\eta)$ are uniformly bounded whenever
$|p|\leq R$. The preceding formulas, together with the smoothness of
$f$, $g$, $b_0$, $B$, and $K$, show that the variational derivatives of
$\mathfrak H$ and all their derivatives in the variables $(x,p)$ up to total order
$r$ are bounded and uniformly continuous on the sets appearing in
Assumption~\ref{ass:rate}~(iv). Hence \(\mathfrak H\) satisfies
Assumption~\ref{ass:rate}~(iv).

For the empirical measure $\eta_{x,p}^N$, the Hamiltonian is
\[
\begin{aligned}
\mathfrak H^N(t,x,p)
={}&
\frac{1}{N}
\sum_{i=1}^N
\left[
f(t,x_i)+b_0(t,x_i)\cdot p_i
\right]
+
\frac{1}{2N^2}
\sum_{i,j=1}^N
K(t,x_i,x_j)
\\
&+
\Psi\left(
\frac{1}{N}
\sum_{i=1}^N
\left[
g(t,x_i)+B(t,x_i)\cdot p_i
\right]
\right).
\end{aligned}
\]
The empirical derivative estimates stated above follow directly from this
formula: each distinct particle index introduced by differentiation produces
one factor of $N^{-1}$.

To obtain a complete example satisfying all parts of
Assumption~\ref{ass:rate}, take the
idiosyncratic-noise dimension equal to $d$ and set
\(
\sigma=I_d.
\)
Let $\sigma_0$ be any fixed constant matrix, and define the terminal
functional by
\[
G(m)
=
\int_{\mathbb{T}^d}
\gamma(x)\,m(dx)
+
\frac{1}{2}
\iint_{\mathbb{T}^d\times\mathbb{T}^d}
\Gamma(x,y)\,m(dx)m(dy),
\]
where $\gamma$ and the symmetric kernel $\Gamma$ are smooth and periodic,
with bounded derivatives up to order $r$. Then
$G\in C_b^r(\mathcal{P}(\mathbb{T}^d))$ in the Lions sense, and
\(
\sigma\sigma^\top=I_d.
\)
Thus Assumption~\ref{ass:rate}~(ii) and
Assumption~\ref{ass:rate}~(iii) also hold.
\end{example}

\subsection{Strategy for proving Theorem~\ref{thm:rate}}
\label{subsec:rate-proof-strategy}

The proof of Theorem~\ref{thm:rate} is based on a viscosity comparison
between the mean-field equation and its finite-particle approximation. This
subsection sets up the comparison and identifies the residual estimate proved
below; the proof of the theorem then assembles these estimates by comparison.

The mean-field value function solves the following PDE on
$\mathcal P(\T^d)$:
\begin{align}\label{eq:original_limit_PDE}
        -\pa_t V(t,\mu)& -\mathfrak{H}(t,\eta_{\mu,D_{\mu}V(t,\mu)})
- \frac{1}{2}\int_{\T^d}\text{Tr}\big(\sigma\sigma^\top D_{x\mu} V(t,\mu,x)\big)\,\mu(dx)
        - \frac{1}{2}\text{Tr}(\sigma_0\sigma_0^\top\Hc V(t,\mu))  =0,\notag \\
        V(T,\mu) =&~ G(\mu),
\end{align}
where the Hessian operator $\Hc$ is given by
\begin{align*}
& \Hc V(t,\mu) := \int_{\T^d} D_{x\mu}V(t,\mu,x)\,\mu(dx) ~ + ~\int_{\T^d}\int_{\T^d}D^2_{\mu\mu} V(t,\mu,x,y)\, \mu(dx)\mu(dy).
\end{align*}
The corresponding $N$-particle approximation is given by the following PDE:
\begin{equation}\label{eq:N_particle}
    \begin{split}
        -\pa_tV^N(t,x) &- \mathfrak{H}\Big(t,\frac1N\sum_{i=1}^N\delta_{(x_i,ND_{x_i}V^N(t,x))} \Big)
        \\ &- \frac{1}{2}\sum_{i = 1}^N \text{Tr}\big(\sigma\sigma^\top D^2_{x_i}V^N(t,x)\big)
        - \frac{1}{2}\sum_{i,j = 1}^N \text{Tr}\big(\sigma_0\sigma_0^\top D^2_{x_i x_j}V^N(t,x)\big)
        ~ = ~ 0,
        \\
        V^N(T,x)~ &=~ G(\mu^N_x),
    \end{split}
\end{equation}
where, for each $N \in \N_+$,
$x := (x_i)_{i \in [N]} \in (\T^d)^N$ and
    $\mu^N_x := \frac{1}{N}\sum_{i = 1}^N\delta_{x_i}.$

For \(\mu\in\mathcal P(\mathbb T^d)\) and \(z\in\mathbb T^d\), write
\(\mu^z:=(I_d+z)_\sharp\mu\). Define the extended value function
$\Vb:[0,T] \x \Pc(\T^d)\x \T^d \longrightarrow \R$ by
\begin{align*}
    \Vb(t,\mu,z) := V(t,\mu^z),
    ~\mbox{for}~(t,\mu,z) \in [0,T] \x \Pc(\T^d)\x \T^d,
\end{align*}
and observe that
$\pa^2_{zz}\Vb(t,\mu,z) = \Hc V(t,\mu^z)$.
If $V$ solves \eqref{eq:original_limit_PDE}, then $\Vb$ solves the
PDE
\begin{equation}\label{eq:extendedPDE}
    \begin{split}
        -\pa_t \Vb(t,\mu,z) ~-& ~
        \sup_{a \in A}\int_{\T^d}
        \Big(
        \ell(t,x+z,\mu^z,a)
        +
        b(t,x+z,\mu^z,a)\cdot D_\mu \Vb(t,\mu, z, x)
        \Big)\mu(dx)
        \\ &
- \frac{1}{2}\int_{\T^d}
\text{Tr}\big(\sigma\sigma^\top 
D_{x\mu} \Vb(t,\mu,z, x)\big)\,\mu(dx)
        \\ &
        - \frac{1}{2}\text{Tr}(\sigma_0\sigma_0^\top\pa^2_{zz}\Vb(t,\mu,z))  =0,
        \\
        \Vb(T,\mu,z) =&~ G(\mu^z).
    \end{split}
\end{equation}
Similarly, define the extended value functions
$\Vb^N: [0,T]\x \R^{dN} \x \T^d \longrightarrow \R$ by
$
    \Vb^N(t,x,z) := V^N(t,x+z),
$
where $x+z := (x_i + z)_{i \in [N]}$; in particular,
\(\mu^N_{x+z}=(\mu_x^N)^z\).

\noindent \textbf{Step 1. Define the inf-convolution $\Vb^N_\eps$ of $\Vb^N$.} Let $V^N$ be the unique classical solution to \eqref{eq:N_particle}.
With the notation above, define the inf-convolution of \(\Vb^N\) by
\begin{align*}
    \Vb^N_\eps(t,\mu,z) := \inf_{s,x,y \in [0,T] \x \R^{dN} \x \T^d} \bigg(\Vb^N(s,x,y) + \frac{|t-s|^2 + |z-y|^2 + \rho_F^2(\mu,\mu^N_x)}{2\eps} \bigg).
\end{align*}
We will prove that $\Vb^N_\eps$ is a viscosity supersolution to \eqref{eq:extendedPDE} up to an error term $E(N,\eps)$. Let $\Phi : [0,T] \x \Pc(\T^d) \x \T^d \longrightarrow \R$ be partially
$C^2$-regular, and assume that
$(t_0,\mu_0,z_0) \in [0,T]\x \Pc(\T^d) \x \T^d$ is a strict local minimum
point of $\Vb^N_\eps - \Phi$ with $t_0 < T$.

\vspace{0.5em}

\noindent\textbf{Step 2. Use the transformed particle equation.} Define the super-convolution
$\Phi_\eps : [0,T] \x \R^{dN} \x \T^d \longrightarrow \R$ of $\Phi$ by
\begin{align*}
    \Phi_\eps(s,x,w) := \sup_{z \in \T^d}\bigg(\Phi(t_0,\mu_0,z) - \frac{|w-z|^2 + |t_0 - s|^2 + \rho_F^2(\mu^N_x,\mu_0)}{2\eps}\bigg).
\end{align*}
Then $\Vb^N - \Phi_\eps$ attains a local minimum at
$(s_0,x_0,w_0) \in [0,T) \x \R^{dN} \x \T^d$,
where $(s_0,x_0,w_0)$ satisfies
\begin{align*}
    \Vb^N_\eps(t_0,\mu_0,z_0)
    =
    \Vb^N(s_0,x_0,w_0) + \frac{|t_0-s_0|^2 + |z_0-w_0|^2 + \rho_F^2(\mu_0,\mu^N_{x_0})}{2\eps}.
\end{align*}
The common-noise term in the transformed particle equation is
\(\frac12\operatorname{Tr}(\sigma_0\sigma_0^\top D^2_{ww}\Vb^N)\).
Set
\[
m_N:=\mu^N_{x_0},
\qquad
p(x):=D_\mu\Phi(t_0,\mu_0,z_0)(x),
\qquad
\Sigma:=\sigma\sigma^\top.
\]
For \(\lambda,\nu\in\mathcal P(\mathbb T^d)\) and a continuous map
\(q:\mathbb T^d\to\mathbb R^d\), define
\begin{align*}
\mathcal K(t,z;\lambda,\nu,q)
:={}&
\sup_{a\in A}\int_{\mathbb T^d}
\Big[
\ell(t,x+z,\nu^z,a)
+b(t,x+z,\nu^z,a)\cdot q(x)
\Big] \,\lambda(dx).
\end{align*}
Thus the limiting Hamiltonian at the contact point is
\(\mathcal K(t_0,z_0;\mu_0,\mu_0,p)\), whereas the particle Hamiltonian is
\(\mathcal K(s_0,w_0;m_N,m_N,p)\).

For \(\varepsilon\) sufficiently small,
Lemma~\ref{lem:derivative_calcu} shows that \(\Phi_\varepsilon\) is \(C^2\)
in \(w\). Since \(\Vb^N\) solves the transformed particle equation classically,
the local minimality of \(\Vb^N-\Phi_\eps\) at \((s_0,x_0,w_0)\), together
with
\(
D^2_{ww}\Phi_\varepsilon(s_0,x_0,w_0)
\ge D^2_{zz}\Phi(t_0,\mu_0,z_0),
\)
yields
\begin{align}
-\partial_t\Phi(t_0,\mu_0,z_0)
\ge{}&
\mathcal K(s_0,w_0;m_N,m_N,p)
+\frac12\sum_{i=1}^N
\operatorname{Tr}\!\left(
\Sigma D^2_{x_i}\Phi_\varepsilon(s_0,x_0,w_0)
\right)
\notag\\
&+\frac12\operatorname{Tr}\!\left(
\sigma_0\sigma_0^\top D^2_{zz}\Phi(t_0,\mu_0,z_0)
\right).
\label{eq:particle-contact-inf}
\end{align}

\vspace{0.5em}

\noindent\textbf{Step 3. Define and estimate the residuals.}
For the inf-convolution argument, let
\begin{align*}
g(x)&:=\operatorname{Tr}\!\left(
\Sigma D_{x\mu}\Phi(t_0,\mu_0,z_0)(x)
\right),\\
Q_N(x,y) & 
:=D^2_{\mu\mu}
\!\left[\mu\mapsto\rho_F^2(\mu,\mu_0)\right]
(m_N)(x,y).
\end{align*}
Define the supersolution residual by the
telescoping formula
\begin{align}
E(N,\eps):={}&
\mathcal K(t_0,z_0;m_N,m_N,p)
-\mathcal K(s_0,w_0;m_N,m_N,p)
\notag\\
&+\mathcal K(t_0,z_0;\mu_0,m_N,p)
-\mathcal K(t_0,z_0;m_N,m_N,p)
\notag\\
&+\mathcal K(t_0,z_0;\mu_0,\mu_0,p)
-\mathcal K(t_0,z_0;\mu_0,m_N,p)
\notag\\
&+\frac12\int_{\mathbb T^d}g(x)\,(\mu_0-m_N)(dx)
+\frac{1}{4\eps N^2}\sum_{i=1}^N
\operatorname{Tr}\!\left(
\Sigma Q_N(x_{0,i},x_{0,i})
\right).
\label{eq:error-inf}
\end{align}
The first three lines separate, respectively, the time--translation error, the
error from replacing the empirical averaging measure by \(\mu_0\), and the
error from replacing the measure argument in the coefficients. By
Lemma~\ref{lem:derivative_calcu}, the terms telescope exactly to
\begin{align*}
E(N,\eps)
={}&
\mathcal K(t_0,z_0;\mu_0,\mu_0,p)
+\frac12\int_{\mathbb T^d}g(x)\,\mu_0(dx)
\\
&-\mathcal K(s_0,w_0;m_N,m_N,p)
-\frac12\sum_{i=1}^N
\operatorname{Tr}\!\left(
\Sigma D^2_{x_i}\Phi_\varepsilon(s_0,x_0,w_0)
\right).
\end{align*}
Consequently, after adding the residual \(|E(N,\eps)|\),
\eqref{eq:particle-contact-inf} yields
\begin{align*}
-\partial_t\Phi(t_0,\mu_0,z_0)+|E(N,\eps)|
\ge{}&
\mathcal K(t_0,z_0;\mu_0,\mu_0,p)
+\frac12\int_{\mathbb T^d}g(x)\,\mu_0(dx)
+\frac12\operatorname{Tr}\!\left(
\sigma_0\sigma_0^\top D^2_{zz}\Phi(t_0,\mu_0,z_0)
\right).
\end{align*}
This shows that $\Vb^N_\eps$ satisfies the supersolution inequality for
\eqref{eq:extendedPDE} up to the error $|E(N,\eps)|$. Together with the
terminal estimate $\Vb^N_\eps(T,\mu,z)\ge G(\mu^z)-C\eps^{1/3}$, comparison
yields
\[
\Vb\;\le\;\Vb^N_\eps+|E(N,\eps)|(T-t)+C\eps^{1/3}
\qquad\text{on }[0,T]\times\Pc(\T^d)\times\T^d.
\]

For the reverse inequality, define the sup-convolution
\begin{align*}
\Vb^{N,\eps}(t,\mu,z)
:=\sup_{s,x,w}
\left\{
\Vb^N(s,x,w)
-\frac{|t-s|^2+|z-w|^2+\rho_F^2(\mu,\mu_x^N)}{2\eps}
\right\}.
\end{align*}
Similarly, $\Vb^{N,\eps}$ is a viscosity subsolution up to an error
$|E(N,\eps)|$, with $\Vb^{N,\eps}(T,\mu,z)\le G(\mu^z)+C\eps^{1/3}$, and
comparison gives, on $[0,T]\times\Pc(\T^d)\times\T^d$,
\begin{align*}
\Vb^{N,\eps}-|E(N,\eps)|(T-t)-C\eps^{1/3}
\;\le\;\Vb\;\le\;
\Vb^{N}_{\eps}+|E(N,\eps)|(T-t)+C\eps^{1/3}.
\end{align*}
Proposition~\ref{prop:Key_estimate} establishes that, for
$\eps=\alpha(N)$ as in \eqref{eq:alpha},
\begin{align*}
|E(N,\eps)|
\le
\begin{cases}
CN^{-1/6}, & d=1,\\[1mm]
CN^{-1/6}(\ln N)^{1/3}, & d=2,\\[1mm]
CN^{-1/(3d)}, & d>2.
\end{cases}
\end{align*}
Finally, take $\mu=\mu^N_x$ and $z=0$. The zero-penalty competitor
$(s,y,w)=(t,x,0)$ in the definitions of the two convolutions gives
\[
\Vb^N_\eps(t,\mu^N_x,0)\le V^N(t,x)\le\Vb^{N,\eps}(t,\mu^N_x,0),
\]
so the previous inequalities yield
\begin{align*}
&V^N(t,x)-C\big(|E(N,\eps)|(T-t)+\eps^{1/3}\big)
\le\Vb^{N,\eps}(t,\mu^N_x,0)-C\big(|E(N,\eps)|(T-t)+\eps^{1/3}\big)
\le V(t,\mu^N_x)\\
&\le\Vb^N_\eps(t,\mu^N_x,0)+C\big(|E(N,\eps)|(T-t)+\eps^{1/3}\big)
\le V^N(t,x)+C\big(|E(N,\eps)|(T-t)+\eps^{1/3}\big),
\end{align*}
which, since $\eps^{1/3}=\alpha(N)^{1/3}$ is of the same order as the bound
on $|E(N,\eps)|$, proves Theorem~\ref{thm:rate}.

\section{Proof of Theorem~\ref{thm:rate}}
\label{sec:proofs}

 \subsection{Derivatives on the space of probability measures}

    \begin{definition}
        \begin{enumerate}[(i)]
            \item A function $u: \Pc(\T^d) \longrightarrow \R$ has a linear functional derivative if there exists
                \(
                    \delta_{\mu}u: \Pc(\T^d) \x \T^d \to \R
                \)
            such that $\delta_\mu u$ is continuous in the product topology and
                \begin{itemize}
                    \item for each $\mu \in \Pc(\T^d)$, the map
                    $x \mapsto \delta_{\mu} u(\mu,x)$ belongs to
                    $C(\T^d)$,
                    \item for all $m_1, m_2 \in \Pc(\T^d)$,
                        \begin{equation*}
                            u(m_1) - u(m_2)
                            ~ = ~
                            \int_0^1 \int_{\T^d}
                                \delta_{\mu} u(\lambda m_1 + (1 - \lambda)m_2,x)(m_1 - m_2)(dx) d\lambda.
                        \end{equation*}
                \end{itemize}
            \item A function $u: \Pc(\T^d) \longrightarrow \R$ has a
            second-order linear functional derivative if, for any
            $x \in \mathbb{T}^d$, the map
            $\mu \mapsto \delta_{\mu}u$ has a linear functional derivative,
            denoted by $\delta_{\mu}^2 u (\mu)(x,y)$, and
            $(\mu,x,y) \mapsto \delta^2_{\mu} u(\mu,x,y)$ is continuous in the
            product topology.
            \item A function $u: \mathcal{P}(\T^d) \longrightarrow \R$ is
            second-order Lions differentiable if the derivatives
            \[
            (\pa_x\delta_\mu u, \pa^2_{xx}\delta_\mu u,
            \pa_x\pa_y\delta^2_\mu u)
            \]
            belong, respectively, to
            \[
            C^d(\mathcal{P}(\mathbb T^d) \x \T^d),\quad
            C^{d\x d}(\mathcal{P}(\mathbb T^d) \x \T^d),\quad
            C^{d\x d}(\mathcal{P}(\mathbb T^d) \x \T^d \x \T^d).
            \]
            We denote $D_\mu u := \pa_x\delta_\mu u$,
            $D_{x\mu} u := \pa^2_{xx}\delta_\mu u$, and
            $D^2_{\mu\mu} u:= \pa_x\pa_y\delta^2_\mu u$.
        \end{enumerate}
    \end{definition}
    For any second-order Lions differentiable function
    $\mu \longmapsto u(\mu)$, we define the partial Hessian 
    \begin{align*}
        \Hc u :=& \pa^2_{zz}u((I_d + z)_\sharp \mu)|_{z = 0}
        =
        \displaystyle
            \int_{\T^d}\int_{\T^d}D^2_{\mu\mu}u(\mu,x,y)\mu(dx)\mu(dy)
                + \int_{\T^d} D_{x\mu} u(\mu,x)\mu(dx).
    \end{align*}

    \subsection{Viscosity solutions}
    We introduce second-order jets and the corresponding notion of viscosity
    solution to \eqref{eq:original_limit_PDE} as follows.
    \begin{definition}
         We say that a function
         $\ub:\Theta \longrightarrow \R$ is partially $C^2$-regular if
         $\ub \in C(\Theta)$, for each $\mu \in \mathcal{P}(\T^d)$ the map
         $(t, z) \longmapsto \ub(t,\mu, z)$ belongs to \(C^{1,2}\), and the following 
         derivatives exist
         \[
            (D_\mu \ub, D_{x\mu} \ub) \in
            C^d(\Theta \x \T^d) \x C^{d \x d}(\Theta \x \T^d).
         \]
    
    \end{definition}

    \begin{definition}
        Let $\ub:\Theta \longrightarrow \R$ be a locally bounded function, and
        let $\theta \in [0,T) \x \Pc(\T^d) \x \T^d$.
        We define the (partial) second-order superjet
        $J^{2,+}\ub(\theta) \subset \R  \x C^d(\Theta \x \T^d) \x C^{d \x d}(\Theta \x \T^d) \x \S_{d}$ of $\ub$ at $\theta$ by
        \begin{align*}
            J^{2,+}\ub(\theta)
            :=
            \{(\pa_t\phi,  D_\mu \phi, D_{x\mu}\phi, \pa^2_{mm}\phi)(\theta): ~
            & \ub - \phi~\mbox{has a local maximum at}~\theta, \\
            & \phi ~\mbox{is partially}~C^2\mbox{-regular}\},
        \end{align*}
        the second-order subjet $J^{2,-}\ub(\theta) := -J^{2,+}(-\ub)(\theta)$,
        and the closed superjet by
        \begin{align*}
            \Jb^{2,+}\ub(\theta)
            := &
            \{(b, p, q, X) \in \R  \x C^d(\Theta \x \T^d) \x C^{d \x d}(\Theta \x \T^d) \x \S_{d}: \\
            & \ub - \phi^n~\mbox{has a local maximum at}~\theta_n,~
             \phi^n~\mbox{is partially}~C^2\mbox{-regular}; ~\mbox{as}~n \longrightarrow +\infty, \\
             & (\pa_t\phi^n,D_\mu \phi^n, D_{x\mu}\phi^n, \pa^2_{mm}\phi^n)(\theta_n) \longrightarrow (b, p, q, X),
             ~\theta_n \longrightarrow \theta,
             \ub(\theta_n) \longrightarrow  \ub(\theta) \},
        \end{align*}
        and $\Jb^{2,-}\ub(\theta) := - \Jb^{2,+}(-\ub)(\theta) $.
    \end{definition}

    \begin{definition}
        We say that $u$ is a viscosity subsolution (supersolution) to
        \eqref{eq:original_limit_PDE} if its extension
        \[
        \bar u(t,\mu,z)
        :=
        u(t,\mu^z),
        \qquad
        (t,\mu,z)\in[0,T]\x\Pc(\T^d)\x\T^d,
        \]
        is locally bounded and, for $(t,\mu, z) \in [0,T) \x \Pc(\T^d) \x \T^d$
        and
        \[
        (c, p, q, X) \in J^{2,+}\bar u(t,\mu, z)
        \quad
        \bigl(\text{respectively }J^{2,-}\bar u(t,\mu, z)\bigr),
        \]
        \begin{align*}
            - c ~-& ~ \sup_{a \in A}\int_{\T^d} \Big(\ell(t,x+z,\mu^z,a) + b(t,x+z,\mu^z,a)\cdot p(x)\Big)\mu(dx)
        \\ &- \frac{1}{2}\int_{\T^d}\text{Tr}\big(\sigma\sigma^\top q(x)\big)\,\mu(dx)
        - \frac{1}{2}\text{Tr}(\sigma_0\sigma_0^\top X) \le (\ge) ~0.
        \end{align*}
        \end{definition}

\subsection{Preliminaries}
We impose the following additional assumption to ensure comparison for
\eqref{eq:original_limit_PDE}; it is adapted from
\cite{DaudinJacksonSeeger,BayraktarEkrenZhangComparison,
BayraktarEkrenHeZhangComparison,BayraktarEkrenHeZhangCommonNoise,
BayraktarEkrenZhangRate}.
\begin{assumption}[Lipschitz data for comparison]\label{ass:comparison}
The functions $b$ and $\ell$ are Lipschitz continuous on their domains uniformly
in $a$.
\end{assumption}
\begin{prop}[Comparison for the limiting equation]\label{prop:comparison-limit}
    Under Assumptions~\ref{ass:rate} and~\ref{ass:comparison}, the value
    function $V$ is the unique viscosity solution of the mean-field PDE
    \eqref{eq:original_limit_PDE}.
\end{prop}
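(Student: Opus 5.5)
The proof has two parts: first, $V$ is a viscosity solution in the sense of the definition above (through its extension $\Vb(t,\mu,z)=V(t,\mu^z)$); second, a comparison principle holds for \eqref{eq:extendedPDE} within a class of functions containing $\Vb$, which yields uniqueness.

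\textbf{Existence.} Since the lifted control problem is translation covariant, I will first check a dynamic programming principle for $\Vb$. Translating the initial law by $z$ is the same as adding $z$ to every particle, and then the common noise shifts the translation variable by $\sigma_0(B_s-B_t)$. Concretely, I will show that $\pi_s^{t,\mu^z,\alpha}=(\tilde\pi_s)^{z+\sigma_0(B_s-B_t)}$, where $\tilde\pi$ is the "unshifted" filter. This filter solves a deterministic-given-$B$ Fokker--Planck equation with drift $b(s,x+Z_s,\tilde\pi^{Z_s},\alpha_s)$ and idiosyncratic diffusion $\Sigma$ only, where $Z_s=z+\sigma_0(B_s-B_t)$. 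The DPP then follows from the open-loop McKean--Vlasov DPP of \cite{BayraktarCossoPham2018,DjetePossamaiTanDPP}, applied to the pair $(\tilde\pi,Z)$ with $\mathbb F^t$-predictable controls. Lipschitz continuity of $V$ in $(t,\rho_F)$ comes from Assumption~\ref{ass:comparison} together with the regularity of $G$; a stability estimate for the filter in $\rho_F$ (or $W_1$) gives continuity. With the DPP in hand, I will take a partially $C^2$-regular test function $\phi$ touching $\Vb$ from above at $(t,\mu,z)$. The Itô formula for $\phi(s,\tilde\pi_s,Z_s)$ uses only first-order measure derivatives in $\tilde\pi$ plus $D_{x\mu}$ from the idiosyncratic diffusion, and an ordinary Itô formula in $Z$; this is exactly the regularity the test functions have. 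Applying it with constant controls $a$ gives the sub/supersolution inequalities in the standard way, including the supremum over $a$ outside the integral, since the control is common.

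\textbf{Comparison.} Given a bounded uniformly continuous subsolution $u$ and supersolution $v$ with $u(T)\le v(T)$, I will double variables using the $\rho_F$ metric. The test function is
\[
\Psi=u(t,\mu,z)-v(s,\nu,w)-\frac{d_F^2((t,\mu,z),(s,\nu,w))}{2\delta}-\beta(T-t),
\]
possibly adding a smooth penalization to force strict maxima. On the compact space $\Pc(\T^d)$, with $\rho_F$ metrizing weak convergence, maxima exist; I will use Borwein--Preiss/Stegall perturbations in $\rho_F$-smooth functions if strictness is needed. Because $k^\ast>d/2+2$, the map $\mu\mapsto\rho_F^2(\mu,\nu)$ is Lions differentiable with $D_\mu$ and $D_{x\mu}$ bounded, and $\pa_x$ of its derivative is controlled by $\rho_F(\mu,\nu)/\delta$. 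In the $z$-variable, Ishii's lemma (finite-dimensional, $d$ variables) provides the matrices $X\le Y$ for the $\sigma_0$ term. There is no second-order measure term, so no infinite-dimensional Ishii lemma is needed. This is precisely why the translation lift is used.

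\textbf{Main obstacle.} The hard step is controlling the Hamiltonian and idiosyncratic-diffusion differences when $\mu\ne\nu$. The penalization derivative $p(x)=\frac1\delta\sum_k \frac{\mathrm{Re}(\dots)}{(1+|k|^2)^{k^\ast}}\nabla f_k(x)$ is smooth with $\|p\|_{C^2}\lesssim \rho_F(\mu,\nu)/\delta$. The difference
\[
\sup_a\int(\ell+b\cdot p)\,d\mu-\sup_a\int(\ell+b\cdot p)\,d\nu
\]
is then bounded by $\|\ell+b\cdot p\|_{C^{k^\ast}}\rho_F(\mu,\nu)$ via Fourier duality. This uses the $C^r$ regularity in $x$ from Assumption~\ref{ass:rate}(i), the Lipschitz bounds in $(\mu,z)$ from Assumption~\ref{ass:comparison}, and the fact that a difference of suprema is at most the supremum of the differences. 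The resulting bound is $C(\rho_F+|z-w|)(1+\rho_F/\delta)$, which tends to $0$ as $\delta\to0$ since $\rho_F^2/\delta\to0$. The term $\int\mathrm{Tr}(\Sigma D_{x\mu}\cdot)\,d(\mu-\nu)$ must also be shown to be nonnegative or small. I will handle it as in \cite{DaudinJacksonSeeger,BayraktarEkrenZhangComparison}: the Fourier weight makes $\int \mathrm{Tr}(\Sigma\,\pa_x p)\,d(\mu-\nu)=-\frac1\delta\sum_k \frac{k^\top\Sigma k\,|F_k(\mu-\nu)|^2}{(1+|k|^2)^{k^\ast}}\le0$, which has a favorable sign. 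Collecting the terms and letting $\delta\to0$ gives $\beta\le0$, a contradiction, so $u\le v$. Uniqueness of $V$ follows.
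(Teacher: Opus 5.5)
Your two-part architecture matches the paper's, which only sketches it and defers to the literature. A dynamic programming principle for the filter, made Markovian in $(\tilde\pi,Z)$ by the translation lift, gives the viscosity property. Comparison for \eqref{eq:extendedPDE} then goes by Fourier--Wasserstein doubling of variables, with a finite-dimensional Ishii lemma in $z$ only.

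\textbf{The gap.} It is in the step you call the main obstacle, namely the transport part of the Hamiltonian difference. Write $b_a:=b(s,\cdot+w,\nu^w,a)$ and let $\|\cdot\|_{H^{k^\ast}}$ be the Sobolev norm dual to $\rho_F$. Fourier duality gives
\[
\Bigl|\int_{\mathbb T^d} b_a\cdot p\,d(\mu-\nu)\Bigr|\le \|b_a\cdot p\|_{H^{k^\ast}}\,\rho_F(\mu,\nu)\le C\,\|b_a\|_{C^{k^\ast}}\,\|p\|_{H^{k^\ast}}\,\rho_F(\mu,\nu).
\]
But $\|p\|_{H^{k^\ast}}$ is not $O(\rho_F/\delta)$; only low-order norms of $p$ are. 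Cauchy--Schwarz gives $\|\nabla^j p\|_\infty\le C\rho_F/\delta$ when $\sum_k|k|^{2+2j}(1+|k|^2)^{-k^\ast}<\infty$, that is, when $k^\ast>d/2+1+j$. So only $j\le 1$ is available, and your $C^2$ claim would already need $k^\ast>d/2+3$. Write $p=\nabla h$ with $h=\delta^{-1}(I-\Delta)^{-k^\ast}(\mu-\nu)$. Then
\[
\|p\|_{H^{k^\ast}}^2=\frac{1}{\delta^2}\sum_k\frac{|k|^2\,|F_k(\mu-\nu)|^2}{(1+|k|^2)^{k^\ast}} .
\]
If the coefficients $F_k(\mu-\nu)$ are supported on $|k|=K$, this equals $K^2\rho_F^2/\delta^2$. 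The duality estimate therefore loses one derivative, and the bound $C(\rho_F+|z-w|)(1+\rho_F/\delta)$ does not follow from it. The other pieces of the Hamiltonian difference are fine: changing $t$, $z$, and the measure argument of $\ell,b$ uses only $\|p\|_\infty\le C\rho_F/\delta$ and $\rho_F$-Lipschitz bounds on $\ell,b$, which Assumption~\ref{ass:rate}~(i) supplies.

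\textbf{Two ways to close it.}

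\emph{(a) Use the idiosyncratic-diffusion term instead of discarding it.} Since $F_0(\mu-\nu)=0$ and $\Sigma\ge\lambda I_d$, your identity gives
\[
\frac12\int_{\mathbb T^d}\operatorname{Tr}(\Sigma\,\partial_x p)\,d(\mu-\nu)\le-\frac{\lambda}{2\delta}\sum_k\frac{|k|^2|F_k(\mu-\nu)|^2}{(1+|k|^2)^{k^\ast}}=-\frac{\lambda\delta}{2}\|p\|_{H^{k^\ast}}^2 .
\]
Young's inequality then absorbs the bad term: $C\rho_F\|p\|_{H^{k^\ast}}\le\frac{\lambda\delta}{2}\|p\|_{H^{k^\ast}}^2+\frac{C}{\delta}\rho_F^2$. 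This is where uniform ellipticity, Assumption~\ref{ass:rate}~(iii), enters comparison arguments of this kind. Your write-up throws away exactly this term.

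\emph{(b) Exploit the transport structure.} Write $\int b_a\cdot\nabla h\,d(\mu-\nu)=\delta\langle (I-\Delta)^{k^\ast}h,\,b_a\cdot\nabla h\rangle_{L^2}$ and expand $(I-\Delta)^{k^\ast}$ as $\sum_{|\alpha|\le k^\ast}c_\alpha\langle\partial^\alpha h,\partial^\alpha(\cdot)\rangle$ with $c_\alpha>0$. The top-order part of each term is $-\frac12\int(\operatorname{div} b_a)|\partial^\alpha h|^2$. The whole expression is therefore at most $C\sup_a\|b_a\|_{C^{k^\ast}}\,\delta\|h\|_{H^{k^\ast}}^2=C\rho_F^2/\delta$.

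Both routes give a bound uniform in $a$, so the supremum outside the integral causes no difficulty, and the rest of your argument goes through.

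\textbf{A smaller point.} Applying the finite-dimensional Ishii lemma in $z$ while $u,v$ still depend on $(t,\mu)$ needs a partial version of the lemma. You should cite it rather than assert it; the paper likewise takes it from its references.
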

\begin{proof}[Proof sketch]
The dynamic programming principle and viscosity characterization follow from
standard dynamic-programming arguments for partially observed control in the
filtering formulation; see
\cite{BensoussanPartialObservation,BandiniCossoFuhrmanPham2018,
BandiniCossoFuhrmanPham2019,BayraktarCossoPham2018,DjetePossamaiTanDPP}.
After the translation lift, the common-noise term becomes the ordinary
\(z\)-Hessian, and comparison follows from
\cite{DaudinJacksonSeeger,BayraktarEkrenZhangComparison,
BayraktarEkrenHeZhangComparison,BayraktarEkrenHeZhangCommonNoise,
BayraktarEkrenZhangRate}.
\end{proof}

\begin{prop}[Uniform particle regularity]\label{prop:particle-regularity}
    Under Assumption~\ref{ass:rate}, equation \eqref{eq:N_particle} has a
    unique classical solution $v^N$, which coincides with the value function
    $V^N$ defined in \eqref{eq:Nvalue}. Moreover, for the integer $k^\ast$
    fixed in Subsection~\ref{subsec:notation}, there is a constant $C>0$,
    independent of $N$, such that, for all $N\in\N_+$,
    $1\leq k\leq k^\ast$, and $i_1,\ldots,i_k\in[N]$,
\[
\left|
D_{x_{i_1}}\cdots D_{x_{i_k}} v^N(t,x)
\right|
\leq
\frac{C}{N},
\qquad
\forall (t,x) \in [0,T] \times \mathbb{T}^{dN}.
\]
In addition, for all $0 \leq s < t \leq T$ and
$x,y \in \mathbb{T}^{dN}$,
\[
\left|
v^N(t,x) - v^N(s,y)
\right|
\leq
C
\left(
\sqrt{t-s}
+
W_1(\mu_x^N,\mu_y^N)
\right).
\]
\end{prop}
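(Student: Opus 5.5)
We plan to argue in three stages: well-posedness of \eqref{eq:N_particle} together with its identification with $V^N$; derivative bounds uniform in $N$; and the time and space continuity estimate.

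\textbf{Well-posedness and identification.} For fixed $N$, \eqref{eq:N_particle} is a parabolic equation on the compact manifold $(\T^d)^N$. Its diffusion matrix $\frac12(I_N\otimes\Sigma+\mathbf 1\mathbf 1^\top\otimes\sigma_0\sigma_0^\top)$ is bounded below by $\frac{\lambda}{2}I_{dN}$, thanks to Assumption~\ref{ass:rate}(iii). The map $(x,p)\mapsto\mathfrak H^N(t,x,p)$ is $C^r$ by Assumption~\ref{ass:rate}(iv). It is also convex and of at most linear growth in $p$, because it is a supremum of functions affine in $p$ whose coefficients $b$ are bounded. Hence classical quasilinear parabolic theory (Ladyzhenskaya--Ural'tseva, or Krylov/Lieberman) gives a unique solution $v^N\in C^{1,2}$, which is smooth in $x$ up to order $r$ because $G(\mu^N_x)$ is $C^r$. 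A standard verification argument identifies $v^N$ with $V^N$. Itô's formula gives $J_N\le v^N$ for every $\alpha\in\mathcal A_{N,t}$. For the reverse inequality we use measurable $\varepsilon$-optimal selectors of the supremum in $\mathfrak H^N$, which are admissible because the controls may depend on all $N+1$ Brownian motions.

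\textbf{Derivative bounds.} We argue by induction on $k\le k^\ast$, following \cite{CardaliaguetDaudinJacksonSouganidis,DaudinDelarueJackson}. First note $\|Dv^N\|_\infty$-type control: for $k=1$ the comparison principle and the Lipschitz dependence of the data on $\mu^N_x$ give $|D_{x_i}v^N|\le C/N$, so $p_i=ND_{x_i}v^N$ stays in a ball of radius $R$ independent of $N$. Next, differentiate \eqref{eq:N_particle} in $x_{i_1},\dots,x_{i_k}$. The highest-order terms produce a linear parabolic equation for $w=D_{x_{i_1}}\cdots D_{x_{i_k}}v^N$ with drift $\big(D_{p_j}\mathfrak H^N\cdot N\big)_j$, which is bounded uniformly in $N$ by the empirical bounds listed in Subsection~\ref{sec:discussion-H}. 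The source terms are products of lower-order derivatives of $v^N$ with $D_z$-derivatives of $\mathfrak H^N$, each carrying the factor $N^{-\#\text{distinct indices}}$ multiplied by the factors $N$ coming from the chain rule in $p_i=ND_{x_i}v^N$. Counting indices as in those references, every source term is $O(1/N)$, and the terminal datum $D^k_{x}G(\mu^N_x)$ is also $O(1/N)$ by Assumption~\ref{ass:rate}(ii). The maximum principle (Feynman--Kac along the linearized dynamics) then yields $|w|\le C/N$.

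\textbf{Main obstacle.} The main difficulty is this index bookkeeping. Derivatives with repeated indices, and the sums over $j$ in the drift and in the common-noise term $\sum_{i,j}D^2_{x_ix_j}$, must not accumulate powers of $N$. We would control the sums by treating the full vector $(D_{x_j}w)_j$ in a weighted norm: estimate $\sup_j N|D_{x_j}\cdot|$ for the derivative with the new index, and use $\sum_j|D^2_{z_iz_j}\mathfrak H^N|\le C/N$ for $j\ne i$, which follows from the $N^{-2}$ bound. This is where the empirical estimates from Assumption~\ref{ass:rate}(iv) replace the separable-Hamiltonian structure used in the cited works.

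\textbf{Continuity estimate.} For the spatial part, couple two initial configurations using the same control and the same noises, and relabel $y$ to be $W_1$-optimally matched to $x$. Since $b$ and $\ell$ are Lipschitz in $(x,\mu)$, the difference of the particle trajectories is controlled uniformly in $\alpha$, which gives $|V^N(t,x)-V^N(t,y)|\le CW_1(\mu^N_x,\mu^N_y)$. The time part follows from dynamic programming over $[s,t]$: the rewards are bounded, and $\mathbb E\,W_1(\mu^N_{X_t},\mu^N_x)\le C\sqrt{t-s}$, so combining with the spatial Lipschitz bound gives the $C\sqrt{t-s}$ term.
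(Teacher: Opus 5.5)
Your well-posedness step, your bound $|D_{x_i}v^N|\le C/N$ via synchronous coupling, and your continuity estimate are sound. The coupling route and the global Lipschitz property of $\mathfrak H^N$ in $p$ are legitimate substitutes for the paper's truncation $\chi_R$ followed by the maximum principle and Gronwall.

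\textbf{The gap is in the higher-order step.} When you differentiate \eqref{eq:N_particle} in $x_{i_1},\dots,x_{i_k}$, the chain rule through $p_j=ND_{x_j}v^N$ produces terms of the form
\[
N^2\sum_{j,l=1}^N D^2_{p_jp_l}\mathfrak H^N\,D_{x_j}Y_1\,D_{x_l}Y_2,
\]
where $Y_1,Y_2$ are derivatives of $v^N$ in complementary nonempty subsets of $\{i_1,\dots,i_k\}$.

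\emph{The source is circular.} $D_{x_j}Y_1$ can have order $k$. So the source is not built from lower-order quantities: it is quadratic in the unknown order-$k$ derivatives, and a Feynman--Kac bound for $w$ is circular.

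\emph{Pointwise bounds are too weak.} Even if you grant the pointwise bound $|D_{x_j}Y|\le C/N$ encoded in your norm $\sup_j N|D_{x_j}\cdot|$, both parts of this sum are $O(1)$:
\begin{itemize}
\item the diagonal part, where $|D^2_{p_jp_j}\mathfrak H^N|\le C/N$, gives $CN\sum_j|D_{x_j}Y_1||D_{x_j}Y_2|\le C$;
\item the off-diagonal part, where $|D^2_{p_jp_l}\mathfrak H^N|\le C/N^2$, gives $C\bigl(\sum_j|D_{x_j}Y_1|\bigr)\bigl(\sum_l|D_{x_l}Y_2|\bigr)\le C$.
\end{itemize}
Neither is $O(1/N)$. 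Your observation $\sum_{j\ne i}|D^2_{z_iz_j}\mathfrak H^N|\le C/N$ only handles the terms that are linear in $D^kv^N$. The real obstruction is the sum over particles of products of derivatives of $v^N$, which pointwise $C/N$ bounds do not make small. If you strengthen the induction to the sharp scaling $|D_{x_ix_j}v^N|\lesssim N^{-2}$ for $i\neq j$, these sums become $O(1/N)$. However, the maximum principle then gives a Riccati-type inequality for the normalized derivative norm, which in general closes only on a short horizon.

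\textbf{The missing ingredient is the paper's energy estimate along the adjoint density.} Let $\phi$ solve the forward adjoint equation of the linearized operator $-\partial_t-\mathcal L^N-\sum_j\beta_j\cdot D_{x_j}$, with $\beta_j=ND_{p_j}\mathfrak H^N$, started at time $s$ from an arbitrary probability density $\rho$. For a lower-order derivative $Y$, the equation for $|Y|^2/2$ contains the dissipation $-\frac12\sum_j|\sigma^\top D_{x_j}Y|^2$. With $\sigma\sigma^\top\ge\lambda I_d$ this yields
\[
\int_s^T\!\!\int_{\mathbb T^{dN}}\sum_{j=1}^N|D_{x_j}Y|^2\phi\,dx\,dr\le C\|Y(T)\|_\infty^2+C\|Y\|_\infty\int_s^T\!\!\int_{\mathbb T^{dN}}|R_Y|\phi\,dx\,dr\le\frac{C}{N^2},
\]
which uses only information of order $<k$.

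Cauchy--Schwarz then bounds $N\int\!\!\int\sum_j|D_{x_j}Y_1||D_{x_j}Y_2|\phi$ by $C/N$. The off-diagonal products are handled the same way, since $\sum_j|D_{x_j}Y|\le\sqrt N\bigl(\sum_j|D_{x_j}Y|^2\bigr)^{1/2}$. Hence the source of the order-$k$ equation is $O(1/N)$ in $L^1(\phi)$, uniformly in $s$ and $\rho$.

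Duality, $\int Z(s)\rho=\int Z(T)\phi(T)+\int\!\!\int R\phi$, with $\rho$ concentrating at a point, then gives $\|D_{x_{i_1}}\cdots D_{x_{i_k}}v^N\|_\infty\le C/N$. The induction must therefore carry two statements together: the $L^\infty$ derivative bound and the $L^1(\phi)$ source bound. In particular, uniform ellipticity is used essentially in the derivative estimates, not only for well-posedness as in your outline.
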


\begin{proof}
The proof follows the adjoint method for particle approximations of
Wasserstein HJB equations
\cite{CardaliaguetSouganidis,CardaliaguetDaudinJacksonSouganidis,
DaudinDelarueJackson,CardaliaguetJacksonMimikosSouganidis,
BayraktarEkrenZhangRate}. We spell out the adaptation to the present
common-control Hamiltonian.

For \(\varphi\in C^2(\mathbb T^{dN})\), define the generator
\[
\mathcal L^N \varphi
:=
\frac12
\sum_{i=1}^N
\operatorname{Tr}(\sigma\sigma^\top D^2_{x_i x_i}\varphi)
+
\frac12
\sum_{i,j=1}^N
\operatorname{Tr}(\sigma_0\sigma_0^\top D^2_{x_i x_j}\varphi).
\]
Set
\(
\mathcal H^N(t,x,p_1,\ldots,p_N):=\mathfrak H(t,\eta_{x,p}^N)
\). Then equation \eqref{eq:N_particle} becomes
\[
-\partial_t v^N-\mathcal L^N v^N
-\mathcal H^N(t,x,ND_{x_1}v^N,\ldots,ND_{x_N}v^N)=0,
\qquad
v^N(T,x)=G(\mu_x^N).
\]

Let \(\chi_R:\mathbb R^d\to\mathbb R^d\) be a smooth truncation which is the
identity on \(B_0(R)\), with
\(|\chi_R(p)|\leq |p|\) and \(\|D\chi_R\|_\infty\leq C\), uniformly in \(R\),
and set
\[
\mathcal H^{N,R}(t,x,p_1,\ldots,p_N)
:=\mathcal H^N(t,x,\chi_R(p_1),\ldots,\chi_R(p_N)).
\]
The resulting equation is uniformly parabolic on the compact manifold
\(\mathbb T^{dN}\), so it has a classical solution \(v^{N,R}\) by standard
parabolic theory \cite{FriedmanParabolic}.

The terminal condition has the correct mean-field scaling. By the Lions chain
rule,
\[
D_{x_i}G(\mu_x^N)=\frac1N D_\mu G(\mu_x^N,x_i),
\qquad
\left|
D_{x_{i_1}}\cdots D_{x_{i_m}}G(\mu_x^N)
\right|
\le \frac{C}{N},
\quad 1\le m\le k^\ast .
\]
We next obtain the first-derivative estimate independently of \(R\).
For \(a\in A\), write
\[
F_a^N(t,x,q)
:=\frac1N\sum_{k=1}^N
\left[
\ell(t,x_k,\mu_x^N,a)
+b(t,x_k,\mu_x^N,a)\cdot q_k
\right],
\qquad
\mathcal H^N(t,x,q)=\sup_{a\in A}F_a^N(t,x,q).
\]
Let \(x^{i,h}\) be obtained from \(x\) by replacing \(x_i\) with \(x_i+h\).
Since
\(
W_1(\mu_{x^{i,h}}^N,\mu_x^N)\leq |h|/N,
\)
Assumption~\ref{ass:rate} (i) gives, uniformly in \(a\),
\[
\left|F_a^N(t,x^{i,h},q)-F_a^N(t,x,q)\right|
\leq
\frac{C|h|}{N}
\left(1+|q_i|+\frac1N\sum_{j=1}^N|q_j|\right).
\]
Moreover, boundedness of \(b\) makes \(F_a^N\), uniformly in \(a\),
\(C/N\)-Lipschitz in each \(q_i\).
Using
\(
|\sup_a f_a-\sup_a g_a|\leq\sup_a|f_a-g_a|,
\)
Assumption~\ref{ass:rate} (iv), and the properties of
\(\chi_R\), we obtain
\[
\left|D_{x_i}\mathcal H^{N,R}(t,x,p)\right|
\leq \frac{C}{N}
\left(1+|p_i|+\frac1N\sum_{j=1}^N|p_j|\right),
\qquad
\left|ND_{p_i}\mathcal H^{N,R}(t,x,p)\right|\leq C.
\]
These estimates hold for all \(p\), with constants independent of \(R\).

Set \(M_R(t):=\max_{1\le j\le N}\|D_{x_j}v^{N,R}(t)\|_\infty\).
For a scalar component \(u_i\) of \(D_{x_i}v^{N,R}\), differentiation of the
truncated equation gives
\[
-\partial_tu_i-\mathcal L^Nu_i
-\sum_{j=1}^N\beta_j\cdot D_{x_j}u_i
=D_{x_i}\mathcal H^{N,R},
\qquad
\beta_j:=ND_{p_j}\mathcal H^{N,R},
\]
where the Hamiltonian and its derivatives are evaluated at
\((t,x,ND_{x_1}v^{N,R},\ldots,ND_{x_N}v^{N,R})\). The preceding estimates
give \(|\beta_j|\leq C\) and
\(\left|D_{x_i}\mathcal H^{N,R}\right|\leq C/N+CM_R(t)\).
The maximum principle therefore yields
\[
M_R(t)\le M_R(T)+\int_t^T\left(\frac{C}{N}+CM_R(r)\right)dr .
\]
Since \(M_R(T)\le C/N\), Gronwall's lemma gives a constant \(C_0\),
independent of \(N\) and \(R\), such that
\(
\max_{1\le i\le N}\|D_{x_i}v^{N,R}\|_\infty\le \frac{C_0}{N}.
\)
Choosing \(R>C_0\), the truncation is inactive. Thus \(v^{N,R}\) solves the
original equation; uniqueness follows from comparison, and the dynamic
programming principle and standard verification identify it with \(V^N\).
We henceforth denote it by \(v^N\).

In particular, all \(p_i=ND_{x_i}v^N\) lie in a fixed bounded set.
Assumption~\ref{ass:rate} (iv), compactness of
\(\mathcal P(\mathbb T^d\times\overline{B_0(C_0)})\), and the empirical-measure
chain rule now imply that, along the solution and with \(z_i=(x_i,p_i)\),
for \(1\leq q\leq k^\ast\),
\[
\left|
D_{z_{i_1}}\cdots D_{z_{i_q}}\mathcal H^N(t,z_1,\ldots,z_N)
\right|
\le
C N^{-\#\{i_1,\ldots,i_q\}}.
\]

For higher derivatives, we use a simultaneous induction on the derivative
bounds and the adjoint source estimates introduced below. The source estimate
for first derivatives follows from the preceding bounds.
Fix $2\leq m\leq k^\ast$, assume that both estimates hold through order
$m-1$, and let
\(
Z=D_{x_{i_1}}\cdots D_{x_{i_m}}v^N.
\)
After differentiating the equation \(m\) times, \(Z\) solves, componentwise, a
linear backward parabolic equation of the form
\[
-\partial_t Z-\mathcal L^N Z
-\sum_{j=1}^N\beta_j\cdot D_{x_j}Z=R,
\]
where
\(
\beta_j
=
N D_{p_j}\mathcal H^N
\bigl(t,x,ND_{x_1}v^N,\ldots,ND_{x_N}v^N\bigr).
\)
The vector fields $\beta_j$ are uniformly bounded. The source $R$ consists of
lower-order terms and products containing spatial gradients of lower-order
derivatives. Although the latter gradients can have order $m$, they are
controlled by the weighted energy estimate below.

Fix \(s<T\), and let \(\rho\) be a smooth nonnegative function on
\(\mathbb T^{dN}\) such that
\(
\int_{\mathbb T^{dN}}\rho\,dx=1.
\)
Let \(\phi\) solve the forward adjoint equation
\[
\begin{cases}
\partial_r\phi-\mathcal L^N\phi
+\sum_{j=1}^N
\operatorname{div}_{x_j}(\beta_j\phi)=0,
& (r,x)\in(s,T)\times\mathbb T^{dN},\\[1mm]
\phi(s,\cdot)=\rho.
\end{cases}
\]
Since \(\mathcal L^N\) has constant coefficients, it is self-adjoint on the
torus. Moreover, the maximum principle and conservation of mass give
\[
\phi\ge0,
\qquad
\int_{\mathbb T^{dN}}\phi(r,x)\,dx=1,
\qquad s\le r\le T.
\]

We shall also use the corresponding weighted energy estimate. Let
$Y=D_{x_{j_1}}\cdots D_{x_{j_\ell}}v^N$ with $1\leq\ell<m$. Its differentiated
equation has the form
\[
-\partial_rY-\mathcal L^NY
-\sum_{j=1}^N\beta_j\cdot D_{x_j}Y=R_Y.
\]
The simultaneous induction hypothesis gives
\[
\|Y\|_\infty\le\frac{C}{N},
\qquad
\int_s^T\int_{\mathbb T^{dN}}|R_Y|\phi\,dx\,dr
\le\frac{C}{N}.
\]
The second estimate is uniform over $s<T$, all smooth nonnegative unit-mass
initial data $\rho$, and the corresponding adjoint solutions $\phi$.
For notational simplicity, consider one scalar component of \(Y\). Using the
definition of \(\mathcal L^N\), we have
\[
\begin{aligned}
&-\partial_r\frac{|Y|^2}{2}
-\mathcal L^N\frac{|Y|^2}{2}
-\sum_{j=1}^N
 \beta_j\cdot D_{x_j}\frac{|Y|^2}{2}
 =
YR_Y
-\frac12\sum_{j=1}^N
  |\sigma^\top D_{x_j}Y|^2
-\frac12
  \left|
  \sigma_0^\top\sum_{j=1}^ND_{x_j}Y
  \right|^2.
\end{aligned}
\]
Multiplying by \(\phi\), integrating over
\((s,T)\times\mathbb T^{dN}\), and using the adjoint equation gives
\[
\begin{aligned}
&\frac12\int_{\mathbb T^{dN}}|Y(s,x)|^2\rho(x)\,dx
+
\frac12\int_s^T\int_{\mathbb T^{dN}}
\left(
\sum_{j=1}^N|\sigma^\top D_{x_j}Y|^2
+
\left|
\sigma_0^\top\sum_{j=1}^ND_{x_j}Y
\right|^2
\right)\phi\,dx\,dr
\\
&=
\frac12\int_{\mathbb T^{dN}}
|Y(T,x)|^2\phi(T,x)\,dx
+
\int_s^T\int_{\mathbb T^{dN}}YR_Y\phi\,dx\,dr .
\end{aligned}
\]
The common-noise term on the left is nonnegative, while the uniform
ellipticity assumption
\(\sigma\sigma^\top\ge\lambda I_d\) gives
\(
\sum_{j=1}^N|\sigma^\top D_{x_j}Y|^2
\ge
\lambda\sum_{j=1}^N|D_{x_j}Y|^2.
\)
Consequently,
\[
\begin{aligned}
\int_s^T\int_{\mathbb T^{dN}}
\sum_{j=1}^N|D_{x_j}Y|^2\phi\,dx\,dr
&\le
C\|Y(T,\cdot)\|_\infty^2
+
C\|Y\|_\infty
\int_s^T\int_{\mathbb T^{dN}}|R_Y|\phi\,dx\,dr
\le
\frac{C}{N^2}.
\end{aligned}
\]
This estimate absorbs the factors \(N\) created by differentiating
\(ND_{x_j}v^N\). For example, for any two such lower-order derivatives
$Y_1$ and $Y_2$, Cauchy--Schwarz gives
\[
\begin{aligned}
&N\int_s^T\int_{\mathbb T^{dN}}
\sum_{j=1}^N
|D_{x_j}Y_1|\,|D_{x_j}Y_2|\phi\,dx\,dr
\\
&\quad\le
N
\left(
\int_s^T\int_{\mathbb T^{dN}}
\sum_{j=1}^N|D_{x_j}Y_1|^2\phi\,dx\,dr
\right)^{1/2}
\left(
\int_s^T\int_{\mathbb T^{dN}}
\sum_{j=1}^N|D_{x_j}Y_2|^2\phi\,dx\,dr
\right)^{1/2}
\le\frac{C}{N}.
\end{aligned}
\]
All the remaining terms in \(R\) either contain an explicit empirical-measure
factor \(1/N\), or are controlled by the same combination of the induction
hypothesis and the preceding energy estimate. Hence, uniformly over the
adjoint solutions described above,
\(
\int_s^T\int_{\mathbb T^{dN}}|R|\phi\,dx\,dr
\le\frac{C}{N}.
\)

Finally, multiplying the equation for \(Z\) by \(\phi\) and using the adjoint
equation yields
\[
\int_{\mathbb T^{dN}}Z(s,x)\rho(x)\,dx
=
\int_{\mathbb T^{dN}}Z(T,x)\phi(T,x)\,dx
+
\int_s^T\int_{\mathbb T^{dN}}R\phi\,dx\,dr.
\]
Since \(\phi(T,\cdot)\) is a probability density and
\(\|Z(T,\cdot)\|_\infty\le C/N\), we obtain
\[
\left|
\int_{\mathbb T^{dN}}Z(s,x)\rho(x)\,dx
\right|
\le
\|Z(T,\cdot)\|_\infty
+
\int_s^T\int_{\mathbb T^{dN}}|R|\phi\,dx\,dr
\le\frac{C}{N}.
\]
Taking \(\rho\) to approximate a Dirac mass at an arbitrary point gives
\(
\|Z(s,\cdot)\|_\infty\le\frac{C}{N}.
\)
This derivative bound, together with the preceding source estimate for $R$,
closes the simultaneous induction.

It remains to prove the modulus estimate. The equation and terminal condition
are invariant under particle permutations, so \(v^N\) is symmetric. Choosing an
optimal matching between \(\mu_x^N\) and \(\mu_y^N\) and using the first
derivative estimate gives
\[
|v^N(t,x)-v^N(t,y)|\le C W_1(\mu_x^N,\mu_y^N).
\]
Finally, the dynamic programming principle and boundedness of the coefficients
give
\[
|v^N(t,x)-v^N(s,x)|
\le
C(t-s)
+
C\sup_{\alpha\in\mathcal A_{N,s}}
\mathbb E\big[W_1(\mu_t^{N,s,x,\alpha},\mu_x^N)\big]
\le C\sqrt{t-s}.
\]
Combining the spatial and temporal estimates proves the claim.
\end{proof}

For $\mu \in \mathcal{P}(\mathbb{T}^d)$, write $\mu^{\otimes N}$ for the
$N$-fold product of the probability measure $\mu$. Define
\[
\hat{v}^N(t,\mu)
:=
\int_{\mathbb{T}^{dN}} v^N(t,\mathbf{y}) \, \mu^{\otimes N}(d\mathbf{y}), \qquad (t,\mu) \in [0,T] \times \mathcal{P}(\mathbb{T}^d).
\]
Define
    \begin{align}\label{eq:alpha}
        \alpha(N) :=
         \left\{
    \begin{aligned}
        &CN^{-\frac{1}{2}}, &~\mbox{if}~ d = 1,
        \\
        &CN^{-\frac{1}{2}}\ln(N), &~\mbox{if}~ d = 2,
        \\
        &CN^{-\frac{1}{d}}, &~\mbox{if}~ d > 2.
	    \end{aligned}
	    \right.
	    \end{align}
The next estimates are standard in the Fourier--Wasserstein
doubling-of-variables argument. We record them with references, emphasizing
that the constants are uniform in \(N\).
\begin{lemma}[Fourier Lipschitz estimate]\label{lem:vhat-lipschitz}
Under Assumption~\ref{ass:rate}, there is a constant $C>0$, independent of
$N$, such that
\[
\left|\hat{v}^N(t,\mu)-\hat{v}^N(t,\nu)\right|
\leq C\rho_F(\mu,\nu).
\]
\end{lemma}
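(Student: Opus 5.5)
The plan is to show that \(\mu\mapsto\hat v^N(t,\mu)\) has a linear functional derivative whose \(x\)-derivatives up to order \(k^\ast\) are bounded uniformly in \(N\). A Sobolev duality argument in Fourier variables then turns this into the \(\rho_F\)-Lipschitz bound.

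\textbf{Step 1: the linear derivative.} Since \(\hat v^N(t,\mu)=\int v^N(t,\mathbf y)\,\mu^{\otimes N}(d\mathbf y)\) is a polynomial in \(\mu\), I will write \(\mu_\lambda=\lambda\mu+(1-\lambda)\nu\) and expand \(\mu^{\otimes N}-\nu^{\otimes N}\) telescopically. This gives
\[
\hat v^N(t,\mu)-\hat v^N(t,\nu)=\int_0^1\int_{\T^d}\delta_\mu\hat v^N(t,\mu_\lambda,x)\,(\mu-\nu)(dx)\,d\lambda,
\]
with
\[
\delta_\mu\hat v^N(t,m,x)=\sum_{i=1}^N\int_{\T^{d(N-1)}}v^N(t,\mathbf y^{-i},x)\,m^{\otimes(N-1)}(d\mathbf y^{-i}),
\]
where \(x\) sits in the \(i\)-th slot. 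Differentiating under the integral, for any multi-index \(\gamma\) with \(1\le|\gamma|\le k^\ast\) we get
\[
D_x^\gamma\delta_\mu\hat v^N(t,m,x)=\sum_{i=1}^N\int D^{\gamma}_{x_i}v^N(t,\mathbf y^{-i},x)\,m^{\otimes(N-1)}(d\mathbf y^{-i}).
\]
Here \(D^\gamma_{x_i}\) is a derivative of order \(|\gamma|\), taken entirely in the single particle \(x_i\). Proposition~\ref{prop:particle-regularity}, applied with \(i_1=\dots=i_k=i\), bounds each integrand by \(C/N\). Summing over the \(N\) indices then gives \(\|D^\gamma_x\delta_\mu\hat v^N(t,m,\cdot)\|_\infty\le C\) for all \(1\le|\gamma|\le k^\ast\), uniformly in \(N\), \(t\) and \(m\).

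\textbf{Step 2: Fourier duality.} I will set \(f(x):=\delta_\mu\hat v^N(t,\mu_\lambda,x)\). Because \(\mu-\nu\) has zero mass, \(f\) may be replaced by \(f-c\) for any constant \(c\), so only the zero mode is affected and it does not enter the pairing. Writing \(f=\sum_k \hat f_k f_k\) and using Cauchy--Schwarz with weights \((1+|k|^2)^{\pm k^\ast/2}\),
\[
\Big|\int f\,d(\mu-\nu)\Big|\le\Big(\sum_{k\neq0}(1+|k|^2)^{k^\ast}|\hat f_k|^2\Big)^{1/2}\rho_F(\mu,\nu).
\]
Up to constants and conjugation conventions, \(\langle f,\mu-\nu\rangle=\sum_k\hat f_k\,F_k(\mu-\nu)^*\). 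For \(k\neq0\) we have \((1+|k|^2)^{k^\ast}\le C|k|^{2k^\ast}\). By Parseval, the sum is then bounded by \(C\sum_{|\gamma|=k^\ast}\|D^\gamma f\|_{L^2(\T^d)}^2\le C\) from Step 1, since \(k^\ast\) is an integer and the torus has finite volume. Integrating over \(\lambda\in[0,1]\) will conclude.

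\textbf{Main obstacle.} The delicate point is Step 1's reliance on the uniform \(C/N\) bound for repeated derivatives in a single particle up to order \(k^\ast\). Proposition~\ref{prop:particle-regularity} provides exactly this, since the indices \(i_1,\dots,i_k\) may coincide. I also need to justify differentiating under the integral and the continuity of \(\delta_\mu\hat v^N\) in \((m,x)\). Both follow from the smoothness of \(v^N\) on the compact \(\T^{dN}\) for fixed \(N\), and weak continuity of \(m\mapsto m^{\otimes(N-1)}\) against continuous integrands.
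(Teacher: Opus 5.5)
Your proposal is correct and follows essentially the paper's route. The paper also bounds the measure derivative of the product lift $\hat v^N$ uniformly in $N$ in the dual Fourier norm associated with $\rho_F$, using Proposition~\ref{prop:particle-regularity}, and integrates along the segment $\lambda\mu+(1-\lambda)\nu$; you supply the details it leaves to the citation, namely the $N$ single-particle summands each contributing $C/N$, and the Parseval/Cauchy--Schwarz step, whose term-by-term pairing with $\mu-\nu$ is justified by absolute convergence of the Fourier series because $k^\ast>d/2$.
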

\begin{proof}[Proof sketch]
The derivative estimates of Proposition~\ref{prop:particle-regularity} imply a
uniform bound on the Lions derivative of the product lift in the dual Fourier
norm associated with \(\rho_F\). The conclusion follows by integrating this
derivative along the segment joining \(\mu\) and \(\nu\), exactly as in
\cite{BayraktarEkrenZhangRate}.
\end{proof}

\begin{prop}[Product lift and empirical error]\label{prop:vhat-empirical}
For any $x,y \in \mathbb{T}^{dN}$, the value function satisfies
\[
\left|v^N(t,x)-\hat{v}^N(t,\mu_x^N)\right| \leq C\alpha(N),
\]
where $C>0$ is independent of $N$. Together with
Lemma~\ref{lem:vhat-lipschitz}, this immediately implies
\[
\left|v^N(t,x)-v^N(t,y)\right|
\leq C\left(\rho_F(\mu_x^N,\mu_y^N)+\alpha(N)\right).
\]
\end{prop}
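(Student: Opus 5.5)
We compare $v^N(t,x)$ with its average over i.i.d.\ resamplings from the empirical measure $\mu_x^N$, by writing
\[
\hat v^N(t,\mu_x^N)-v^N(t,x)=\int_{\mathbb T^{dN}}\bigl(v^N(t,\mathbf y)-v^N(t,x)\bigr)\,(\mu_x^N)^{\otimes N}(d\mathbf y).
\]
By permutation symmetry of $v^N$ (the equation and terminal datum are symmetric), $v^N(t,\cdot)$ depends only on the empirical measure. The first-derivative bound $|D_{x_i}v^N|\le C/N$ of Proposition~\ref{prop:particle-regularity}, combined with an optimal matching argument, gives $|v^N(t,\mathbf y)-v^N(t,x)|\le C\,W_1(\mu^N_{\mathbf y},\mu^N_x)$.

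A $W_1$ rate alone would give the standard Fournier--Guillin rate $\mathbb E W_1(\mu^N_{\mathbf y},\mu)\lesssim N^{-1/2},N^{-1/2}\ln N,N^{-1/d}$ for $d=1,2,>2$. This is exactly $\alpha(N)$, because on the compact torus the empirical $W_1$ rate for $d=1$ is $N^{-1/2}$. So the plan is simply the following.

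\textbf{Step 1.} Establish that $v^N(t,\cdot)$ is symmetric and $C/N$-Lipschitz in each coordinate. Then, for any permutation $\tau$, we have $|v^N(t,\mathbf y)-v^N(t,x)|\le \frac CN\sum_i|y_i-x_{\tau(i)}|$. Minimizing over $\tau$ gives $C\,W_1(\mu^N_{\mathbf y},\mu^N_x)$, since for uniform empirical measures with $N$ atoms the optimal coupling is a permutation (Birkhoff). This step also gives the modulus bound already stated in Proposition~\ref{prop:particle-regularity}.

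\textbf{Step 2.} Take $\mathbf y$ distributed as $(\mu_x^N)^{\otimes N}$, that is, $N$ i.i.d.\ samples from the fixed measure $m=\mu_x^N\in\mathcal P(\mathbb T^d)$. Then $|v^N(t,x)-\hat v^N(t,\mu^N_x)|\le C\,\mathbb E\,W_1(\mu^N_{\mathbf y},m)$. Apply the Fournier--Guillin bound on the compact set $\mathbb T^d$. That bound is uniform over all probability measures $m$, since all moments are bounded by $\mathrm{diam}(\mathbb T^d)$. This gives $\mathbb E W_1\le C\alpha(N)$ with a constant independent of $m$, hence independent of $x$ and $N$.

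\textbf{Step 3.} For the consequence, apply the first estimate at both $x$ and $y$, and use Lemma~\ref{lem:vhat-lipschitz} to bound $|\hat v^N(t,\mu_x^N)-\hat v^N(t,\mu_y^N)|\le C\rho_F(\mu_x^N,\mu_y^N)$. The triangle inequality then yields the claim.

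The main obstacle is only in checking uniformity: the empirical $W_1$ rate must hold uniformly over the sampling measure $m$, including atomic ones, and the $d=2$ logarithm must match. I would cite the torus version of Fournier--Guillin \cite{FournierGuillin}, whose constants depend only on the dimension and the diameter of the support. If one prefers a self-contained argument, an alternative is a dyadic partition argument: $\mathbb E W_1\lesssim\sum_\ell 2^{-\ell}\sum_{Q\in\mathcal Q_\ell}\mathbb E|\mu^N(Q)-m(Q)|\lesssim\sum_\ell 2^{-\ell}\min(1,\sqrt{2^{d\ell}/N})$. This sum reproduces the three regimes of $\alpha(N)$, so the expected difficulty is modest.
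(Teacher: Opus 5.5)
Your proposal is correct and follows the paper's own argument. You resample $N$ i.i.d.\ points from $\mu_x^N$, bound the deviation by the $W_1$ modulus (obtained, as in the paper, from permutation symmetry and the $C/N$ first-derivative bound via an optimal matching), invoke the compact-space Fournier--Guillin rate uniformly in the sampling measure, and conclude with the triangle inequality and Lemma~\ref{lem:vhat-lipschitz}; your optional dyadic-partition computation is a valid self-contained substitute for the citation and reproduces the three regimes of $\alpha(N)$.
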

\begin{proof}[Proof sketch]
Taking \(Y_1,\ldots,Y_N\) i.i.d. with law \(\mu_x^N\), the modulus estimate in
Proposition~\ref{prop:particle-regularity} reduces the first bound to the
compact-space empirical \(W_1\) estimate of \cite{FournierGuillin}. The second
bound follows by applying the first one at \(x\) and \(y\), and then using
Lemma~\ref{lem:vhat-lipschitz}.
\end{proof}

\begin{lemma}[Deterministic empirical approximation]\label{lem:empirical-approximation}
For any $\mu \in \mathcal{P}(\mathbb{T}^d)$,
\[
\inf_{x \in \mathbb{T}^{dN}} \rho_F(\mu_x^N,\mu) \leq C\alpha(N).
\]
\end{lemma}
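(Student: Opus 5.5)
The plan is a probabilistic existence argument: a random configuration achieves the bound on average, so some deterministic configuration does too. Let \(Y_1,\ldots,Y_N\) be i.i.d.\ with law \(\mu\), and set \(\mu^N_Y:=\frac1N\sum_{i=1}^N\delta_{Y_i}\). Then
\[
\inf_{x\in\mathbb T^{dN}}\rho_F(\mu^N_x,\mu)\le \mathbb E\big[\rho_F(\mu^N_Y,\mu)\big].
\]
It therefore suffices to bound this expectation by \(C\alpha(N)\), with \(C\) independent of \(\mu\).

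\textbf{Step 1: compare \(\rho_F\) with \(W_1\).} I would show \(\rho_F\le C\,W_1\) on \(\mathcal P(\mathbb T^d)\), using duality:
\[
\rho_F(\mu,\nu)=\sup\Big\{\langle f,\mu-\nu\rangle:\ \|f\|_{H^{k^\ast}}\le 1\Big\},
\]
where \(\|f\|_{H^{k^\ast}}^2=\sum_k(1+|k|^2)^{k^\ast}|\hat f_k|^2\). This follows from Cauchy--Schwarz on the Fourier side, using real test functions, since \(F_{-k}=F_k^\ast\). Because \(k^\ast>d/2+1\), Sobolev embedding gives \(H^{k^\ast}(\mathbb T^d)\hookrightarrow C^1(\mathbb T^d)\). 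Hence every admissible \(f\) is \(C\)-Lipschitz, and Kantorovich--Rubinstein duality yields \(\rho_F\le CW_1\).

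\textbf{Step 2: apply Fournier--Guillin.} Next I would use the empirical \(W_1\) rate on the compact space \(\mathbb T^d\), as in the proof of Proposition~\ref{prop:vhat-empirical}. By \cite{FournierGuillin},
\[
\mathbb E\big[W_1(\mu^N_Y,\mu)\big]\le
\begin{cases}
CN^{-1/2}, & d=1,\\
CN^{-1/2}\ln N, & d=2,\\
CN^{-1/d}, & d>2,
\end{cases}
\]
uniformly over \(\mu\), because all moments on the torus are bounded. This is exactly \(\alpha(N)\) up to the constant. Combining this with Step 1 and the opening reduction proves the lemma.

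The main point to check is Step 1, and in particular the uniformity of the Sobolev constant and the handling of complex versus real test functions. An alternative worth noting is that for \(d\le 3\) one can obtain even better rates directly in \(\rho_F\). Since \(\mathbb E|F_k(\mu^N_Y-\mu)|^2\le \frac{(2\pi)^{-d}}{N}\) and \(\sum_k(1+|k|^2)^{-k^\ast}<\infty\), Jensen's inequality gives \(\mathbb E\rho_F(\mu^N_Y,\mu)\le CN^{-1/2}\) in every dimension, which is at most \(C\alpha(N)\). I would in fact present this variance computation as the main proof: it is elementary and avoids both the Sobolev step and \cite{FournierGuillin}. The \(W_1\) route would then serve only as a consistency check.
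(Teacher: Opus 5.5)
Your Steps 1--2 reproduce the paper's proof: it also uses $k^\ast>d/2+2$ to dominate $\rho_F$ by $W_1$, applies \cite{FournierGuillin} to i.i.d.\ samples from $\mu$, and selects a realization no worse than the expectation.

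The variance computation you promote to the main proof is a genuinely different route, and it is correct. Since $\rho_F^2$ is a weighted $\ell^2$-sum of Fourier coefficients, and each $F_k(\mu^N_Y-\mu)$ is the mean of $N$ i.i.d.\ centered variables of variance at most $(2\pi)^{-d}$, you get $\mathbb E\,\rho_F^2(\mu^N_Y,\mu)\le (2\pi)^{-d}N^{-1}\sum_{k}(1+|k|^2)^{-k^\ast}$. This needs only $2k^\ast>d$ and avoids Sobolev embedding, Kantorovich--Rubinstein duality and \cite{FournierGuillin}. It yields the dimension-free bound $CN^{-1/2}$, uniformly in $\mu$. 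That bound beats $\alpha(N)$ for every $d\ge2$, not only for $d\le3$ as you wrote, and matches it for $d=1$.

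What the paper's route buys is consistent bookkeeping rather than sharpness. The rate $\alpha(N)$ must appear anyway in Proposition~\ref{prop:vhat-empirical}, where $W_1$ is genuinely needed because the spatial modulus of $v^N$ in Proposition~\ref{prop:particle-regularity} is stated in $W_1$. Consequently, in Lemma~\ref{lem:Difference_estimate} the $\alpha(N)$ contributed by Proposition~\ref{prop:vhat-empirical} remains the bottleneck. Your sharper lemma therefore does not improve the rate of Theorem~\ref{thm:rate} within the paper's argument.

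One minor point affects both routes equally. With the convention $\alpha(N)=CN^{-1/2}\ln N$ for $d=2$, one has $\alpha(1)=0$. So the comparison with $\alpha(N)$, and indeed the lemma as stated, requires $N\ge2$, or $\ln(1+N)$ as in \cite{FournierGuillin}.
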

\begin{proof}[Proof sketch]
Since \(k^\ast>d/2+2\), the Fourier metric is controlled by \(W_1\). Applying
the compact-space empirical estimate of \cite{FournierGuillin} to i.i.d.
samples from \(\mu\), and then choosing one realization with error no larger
than the expectation, gives the claim.
\end{proof}

\begin{lemma}[Inf-convolution contact point]\label{lem:inf-convolution-transfer}
Assume that \((t_0,\mu_0,z_0)\in [0,T)\times \mathcal P(\mathbb T^d)\times \mathbb T^d\) is a strict local minimum point of
$\overline V_\varepsilon^N-\Phi.$
Then, for \(\varepsilon>0\) sufficiently small, there exists
\((s_0,x_0,w_0)\in [0,T)\times (\mathbb T^d)^N\times \mathbb T^d\) such that
\[
\overline V_\varepsilon^N(t_0,\mu_0,z_0)
=
\overline V^N(s_0,x_0,w_0)
+
\frac{|t_0-s_0|^2+|z_0-w_0|^2+\rho_F^2(\mu_0,\mu_{x_0}^N)}
{2\varepsilon},
\]
and \(\overline V^N-\Phi_\varepsilon\) has a local minimum at \((s_0,x_0,w_0)\).
\end{lemma}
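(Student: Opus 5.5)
We prove the lemma in three steps: first that the infimum defining \(\Vb^N_\eps(t_0,\mu_0,z_0)\) is attained, then that the time component of a minimizer stays below \(T\), and finally the local-minimum property by a direct comparison of \(\Vb^N-\Phi_\eps\) with \(\Vb^N_\eps-\Phi\).

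\textbf{Step 1: attainment.} Since \(v^N\) is periodic in each \(x_i\), we may restrict \(x\) to \((\T^d)^N\). The map
\[
(s,x,y)\longmapsto \Vb^N(s,x,y)+\frac{|t_0-s|^2+|z_0-y|^2+\rho_F^2(\mu_0,\mu^N_x)}{2\eps}
\]
is continuous on the compact set \([0,T]\times(\T^d)^N\times\T^d\). Continuity of \(\Vb^N\) follows from Proposition~\ref{prop:particle-regularity}, and \(\rho_F\) is continuous with respect to weak convergence. So a minimizer \((s_0,x_0,w_0)\) exists, which gives the stated identity.

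\textbf{Step 2: \(s_0<T\) for small \(\eps\).} Compare the minimizer with the competitor \((s,x,y)=(t_0,x_0,w_0)\). This shows \(|t_0-s_0|^2/(2\eps)\le \Vb^N(t_0,x_0,w_0)-\Vb^N(s_0,x_0,w_0)\). By the time modulus in Proposition~\ref{prop:particle-regularity}, the right-hand side is at most \(C|t_0-s_0|^{1/2}\). Hence \(|t_0-s_0|\le C\eps^{2/3}\), and since \(t_0<T\) is fixed, \(s_0<T\) once \(\eps\) is small.

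I expect this to be the delicate point. The strict local minimum holds only on a neighbourhood of \((t_0,\mu_0,z_0)\), and \(\Phi\) is only partially \(C^2\)-regular, so the smallness threshold on \(\eps\) must be allowed to depend on \(t_0\). If a threshold uniform in \(t_0\) is required, I would fall back on the terminal estimate and the standard convention of treating points near \(T\) separately.

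\textbf{Step 3: local minimum.} Set \(c:=\Vb^N_\eps(t_0,\mu_0,z_0)-\Phi(t_0,\mu_0,z_0)\). By local minimality, \(\Vb^N_\eps-\Phi\ge c\) on a neighbourhood \(\mathcal O\) of \((t_0,\mu_0,z_0)\). Fix \((s,x,w)\) near \((s_0,x_0,w_0)\) and any \(z\in\T^d\) with \((t_0,\mu_0,z)\in\mathcal O\). The definition of the inf-convolution, with competitor \((s,x,w)\), gives
\[
\Vb^N(s,x,w)+\frac{|t_0-s|^2+|z-w|^2+\rho_F^2(\mu_0,\mu^N_x)}{2\eps}\ge \Vb^N_\eps(t_0,\mu_0,z)\ge \Phi(t_0,\mu_0,z)+c .
\]
Rearranging, \(\Vb^N(s,x,w)-c\ge \Phi(t_0,\mu_0,z)-\frac{|t_0-s|^2+|z-w|^2+\rho_F^2(\mu_0,\mu^N_x)}{2\eps}\). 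Taking the supremum over admissible \(z\) yields \(\Vb^N-\Phi_\eps\ge c\), with equality at \((s_0,x_0,w_0)\) by choosing \(z=z_0\).

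The sup in \(\Phi_\eps\) runs over all \(z\in\T^d\), whereas we only control \(z\) in a neighbourhood of \(z_0\). To close this gap we show the sup is essentially localized:

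1. Choose \(\eps\) small relative to \(\mathrm{osc}\,\Phi(t_0,\mu_0,\cdot)\), which is bounded on the compact \(\T^d\).
2. For \(w\) near \(w_0\), the optimal \(z\) then satisfies \(|z-w|^2\le 4\eps\,\|\Phi\|_\infty\), so it is close to \(w\).
3. We also need \(|w_0-z_0|\) small. This follows from \(|z_0-w_0|^2\le 2\eps\,\mathrm{osc}\,\Vb^N\), using the competitor \(y=z_0\).

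Together these place every relevant \(z\) inside the neighbourhood, which completes the argument.
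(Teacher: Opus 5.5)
Your Steps 1 and 2 and the basic inequality in Step 3 are the paper's argument: attainment by compactness, the bound $|t_0-s_0|\le C\eps^{2/3}$ from the time modulus, and then the inf-convolution inequality followed by a supremum over $z$. The gap is the localization patch at the end of Step 3.

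Your bounds show only that every maximizer $z^*(w)$ in the definition of $\Phi_\eps(s,x,w)$ lies within $C\sqrt{\eps}+|w-w_0|$ of $z_0$. To place it inside $\mathcal O$ you need $C\sqrt{\eps}$ to be smaller than the radius of $\mathcal O$. But $\mathcal O$ is the neighbourhood on which $\Vb^N_\eps-\Phi\ge c$, so it depends on the test function. It also depends on $\eps$ itself, through $\Vb^N_\eps$ and through the contact point. Nothing prevents its radius from being much smaller than $\sqrt{\eps}$, so ``choose $\eps$ small'' is circular. It is also unusable downstream, where $\eps=\alpha(N)$ is fixed while the test function is arbitrary.

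This is not just a defect of the write-up; the conclusion itself can fail. Suppose the minimum in the $z$-slice is only local and $\Phi(t_0,\mu_0,\cdot)$ is large somewhere outside $\mathcal O$. Then the maximizer $z^*$ defining $\Phi_\eps(s_0,x_0,w_0)$ sits far from $z_0$. Minimality in the inf-convolution gives $D_w\Vb^N(s_0,x_0,w_0)=(z_0-w_0)/\eps$. A local minimum of $\Vb^N-\Phi_\eps$ at $w_0$ would instead force $(z^*-w_0)/\eps$, and the two are incompatible.

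The repair is the standard one, and it is what makes the paper's one-line ``take the supremum over $z$'' legitimate. The jet at $(t_0,\mu_0,z_0)$ depends only on $\Phi$ near that point, so you may assume the minimum is global in the $z$-slice. Concretely, replace $\Phi$ by $\Phi-K\psi(z-z_0)$, where:
\begin{itemize}
\item $\psi$ is smooth and periodic, vanishes near $0$, and equals $1$ off the ball of radius $\delta$ on which the local minimum holds;
\item $K$ exceeds the oscillation of $\Vb^N_\eps-\Phi$ on the compact set $\Theta$.
\end{itemize}
The modified function is still partially $C^2$-regular and has the same derivatives at the contact point. It also satisfies $\Vb^N_\eps(t_0,\mu_0,z)-\Phi(t_0,\mu_0,z)\ge c$ for \emph{every} $z\in\T^d$.

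Your displayed inequality then holds for all $(s,x,w)$ and all $z$. Hence $\Vb^N-\Phi_\eps\ge c$ everywhere, with equality at $(s_0,x_0,w_0)$, for every $\eps>0$. Items 1--3 become unnecessary. The only remaining smallness requirement is the one from Step 2 that guarantees $s_0<T$, which you handled correctly, including the dependence on $t_0$.
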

\begin{proof}
The infimum is attained by compactness and continuity. The quadratic time
penalty keeps \(s_0<T\) for \(\varepsilon\) small because \(t_0<T\). The local
minimum property follows by the standard inf-convolution transfer argument:
combine the definition of \(\overline V_\varepsilon^N\) near
\((t_0,\mu_0,z_0)\) with the local minimality of
\(\overline V_\varepsilon^N-\Phi\), then take the supremum over \(z\) in the
definition of \(\Phi_\varepsilon\).
\end{proof}

\begin{lemma}[Convolution derivative identities]\label{lem:derivative_calcu}
Assume that \(\Phi\) is partially \(C^2\)-regular. Then the following
derivative relations hold:
    \begin{align*}
        &\pa_t\Phi_\eps(s_0,x_0,w_0)
        ~ = ~
        \partial_t\Phi(t_0,\mu_0,z_0)
        =
        \frac{t_0-s_0}{\varepsilon},
        \\&
         ND_{x_i}\Phi_\varepsilon(s_0,x_0,w_0)
         =
         D_\mu\Phi(t_0,\mu_0,z_0)(x_{0,i})
        =
        \frac{1}{2\varepsilon}
        D_\mu\!\left[\mu\mapsto \rho_F^2(\mu,\mu_{x_0}^N)\right](\mu_0)(x_{0,i}),
        \\&
        D_{x\mu}\Phi(t_0,\mu_0,z_0)(\xi)
        =
        \frac{1}{2\varepsilon}
        D_xD_\mu\!\left[\mu\mapsto \rho_F^2(\mu,\mu_{x_0}^N)\right](\mu_0)(\xi),
        \\&
        D_{x_i}^2\Phi_\varepsilon(s_0,x_0,w_0)
        =
        \frac{1}{N}
        D_{x\mu}\Phi(t_0,\mu_0,z_0)(x_{0,i})
        -
        \frac{1}{2\varepsilon N^2}
        D_{\mu\mu}^2\!\left[\mu\mapsto \rho_F^2(\mu,\mu_0)\right](\mu_{x_0}^N)(x_{0,i},x_{0,i}),
    \end{align*}
    Moreover, for \(\varepsilon>0\) sufficiently small the maximizer
    \[
    z(w):=\operatorname*{arg\,max}_{z\in\mathbb T^d}
    \left\{
    \Phi(t_0,\mu_0,z)-\frac{1}{2\varepsilon}|w-z|^2
    \right\}
    \]
    is unique and depends \(C^1\)-smoothly on \(w\). Consequently,
    \(w\mapsto\Phi_\varepsilon(s_0,x_0,w)\) is \(C^2\). With
    \(H(w):=D_z^2\Phi(t_0,\mu_0,z(w))\), one has
    \[
    D_{ww}^2\Phi_\varepsilon(s_0,x_0,w)
    =
    H(w)\bigl(I_d-\varepsilon H(w)\bigr)^{-1}
    \ge H(w).
    \]
\end{lemma}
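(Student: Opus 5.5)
The identities come from first-order optimality at the two contact points produced by Lemma~\ref{lem:inf-convolution-transfer}. We use two facts. First, $(t_0,\mu_0,z_0)$ minimizes $\Vb^N_\eps-\Phi$. Second, $(s_0,x_0,w_0)$ attains the infimum defining $\Vb^N_\eps(t_0,\mu_0,z_0)$ and minimizes $\Vb^N-\Phi_\eps$.

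\textbf{First-order identities.} Freeze $(s_0,x_0,w_0)$ in the definition of $\Vb^N_\eps$. This shows that
\[
(t,\mu,z)\mapsto \Vb^N(s_0,x_0,w_0)+\frac{|t-s_0|^2+|z-w_0|^2+\rho_F^2(\mu,\mu^N_{x_0})}{2\eps}
\]
touches $\Vb^N_\eps$ from above at $(t_0,\mu_0,z_0)$. Since $\Vb^N_\eps-\Phi$ has a local minimum there, this smooth majorant minus $\Phi$ also has a local minimum at $(t_0,\mu_0,z_0)$. Differentiating in $t$, in $\mu$ (Lions sense), and in $z$ gives
\[
\partial_t\Phi=\frac{t_0-s_0}{\eps},\qquad
D_\mu\Phi(\cdot)=\frac{1}{2\eps}D_\mu\rho_F^2(\cdot,\mu^N_{x_0})(\mu_0),\qquad
D_z\Phi=\frac{z_0-w_0}{\eps}.
\]
The $D_\mu$ equality holds on the support of $\mu_0$; we take this as the meaning of the identity, as in \cite{BayraktarEkrenZhangRate}, or extend it using smoothness of $\rho_F^2$, which has a $C^{k^\ast-d/2}$ kernel.

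On the particle side, $\Phi_\eps$ has explicit dependence on $s$ and on $x$ only through $-\frac{|t_0-s|^2+\rho_F^2(\mu^N_x,\mu_0)}{2\eps}$, plus the sup over $z$ in the $w$ variable. The $s$-derivative is $(t_0-s_0)/\eps$. The empirical chain rule gives
\[
ND_{x_i}\Phi_\eps=-\frac{1}{2\eps}D_\mu\rho_F^2(\cdot,\mu_0)(\mu^N_{x_0})(x_{0,i}).
\]
Since $\rho_F^2(\mu,\nu)$ is a quadratic form in $\mu-\nu$, its $\mu$- and $\nu$-derivatives are negatives of each other, with $D_\mu$ in the first slot at $\mu_0$ equal to minus $D_\mu$ in the first slot at $\mu^N_{x_0}$. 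This yields the second line of the statement. The $D_{x\mu}$ identity follows by differentiating the $D_\mu$ identity in $\xi$; this is legitimate because both sides are smooth functions of $\xi$ and the quadratic form makes the formula hold identically.

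\textbf{Second-order particle derivative.} Differentiate $x_i\mapsto -\rho_F^2(\mu^N_x,\mu_0)/(2\eps)$ twice using the empirical chain rule. This gives the $N^{-1}D_xD_\mu$ term plus the $N^{-2}D^2_{\mu\mu}$ diagonal term. We then rewrite the first term as $N^{-1}D_{x\mu}\Phi(t_0,\mu_0,z_0)(x_{0,i})$ using the quadratic structure: $D_xD_\mu\rho_F^2(\cdot,\mu_0)$ evaluated at $\mu^N_{x_0}$ equals minus the same quantity at $\mu_0$ with reference $\mu^N_{x_0}$, because $D_xD_\mu$ of a quadratic form is linear in the difference of the measures.

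\textbf{The $w$-regularity (main obstacle).} Set $f(z)=\Phi(t_0,\mu_0,z)$. For $\eps\|D^2f\|_\infty<1$, the map $z\mapsto f(z)-|w-z|^2/(2\eps)$ is strictly concave, locally on $\T^d$. We must also rule out competing maximizers across the periodic cell. For this, note that any maximizer satisfies $|z-w|\le C\eps$ by comparison with $z=w$, using boundedness of $Df$, so the maximizer lies in a small ball where strict concavity gives uniqueness. The first-order condition $Df(z)=(z-w)/\eps$ and the implicit function theorem, with Jacobian $I_d-\eps H$ invertible, show that $z(w)$ is $C^1$ with $Dz(w)=(I_d-\eps H)^{-1}$. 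By the envelope theorem, $D_w\Phi_\eps=(z(w)-w)/\eps=Df(z(w))$. Differentiating once more gives $D^2_{ww}\Phi_\eps=H(I_d-\eps H)^{-1}$. Since $H$ is symmetric and commutes with $(I_d-\eps H)^{-1}$, diagonalizing shows the eigenvalues are $h/(1-\eps h)\ge h$ whenever $\eps h<1$, which gives the matrix inequality. The only delicate point is the need for $C^2$ bounds on $f$ uniform near $z_0$, which follow from partial $C^2$-regularity and compactness of $\T^d$.
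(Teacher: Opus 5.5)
Your proposal follows the paper's route: the frozen-penalty majorant at the contact point, the empirical chain rule combined with the antisymmetry $D_\mu[\rho_F^2(\cdot,\mu_0)](m_N)=-D_\mu[\rho_F^2(\cdot,m_N)](\mu_0)$ (with $m_N:=\mu^N_{x_0}$), and the implicit-function/envelope computation in $w$. Your bound $|z-w|\le C\eps$ makes uniqueness of the maximizer more explicit than the paper does.

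\textbf{The gap.} The step that fails is the passage from the first-order condition in $\mu$ to identities valid for every $\xi$. Test the local maximality of $F(\mu):=\Phi(t_0,\mu,z_0)-\rho_F^2(\mu,m_N)/(2\eps)$ along $\mu_0+h(\delta_x-\mu_0)$. This only shows that $x\mapsto\delta_\mu F(\mu_0,x)$ attains its global maximum at each point of $\operatorname{supp}\mu_0$. Hence you get $D_\mu\Phi(t_0,\mu_0,z_0)=p:=\frac{1}{2\eps}D_\mu[\rho_F^2(\cdot,m_N)](\mu_0)$ on $\operatorname{supp}\mu_0$, and there only the matrix inequality $D_{x\mu}\Phi(t_0,\mu_0,z_0)\le\frac{1}{2\eps}D_xD_\mu[\rho_F^2(\cdot,m_N)](\mu_0)$.

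Neither of your justifications upgrades this. Smoothness of the kernel of $\rho_F^2$ constrains the penalty, not the arbitrary test function $\Phi$. Two smooth functions that agree on a set with empty interior need not have equal derivatives there.

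In fact no argument can give the equalities. Take $\mu_0=\delta_0$ and $\psi(x)=\sum_j(1-\cos x_j)$. Then $\Phi-\int\psi\,d\mu$ still touches $\Vb^N_\eps$ from below at $(t_0,\mu_0,z_0)$. It has the same $\Phi_\eps$ and the same contact point $(s_0,x_0,w_0)$. But its $D_\mu$ differs by $-D\psi$ off the support, and its $D_{x\mu}$ differs by $-D^2\psi(0)=-I_d$ on it. So three claims fail in general: the middle equality of the second line (at atoms $x_{0,i}\notin\operatorname{supp}\mu_0$), the third line, and your rewriting of the first term in the fourth line. The paper's proof makes the same leap when it differentiates the $D_\mu$ identity in $\xi$.

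\textbf{What survives.} Your computations do prove what Proposition~\ref{prop:Key_estimate} actually needs:
\begin{enumerate}[(i)]
\item $ND_{x_i}\Phi_\eps(s_0,x_0,w_0)=p(x_{0,i})$ exactly, by your antisymmetry argument.
\item $D_\mu\Phi(t_0,\mu_0,z_0)=p$ on $\operatorname{supp}\mu_0$. This suffices for $\mathcal K(t_0,z_0;\mu_0,\mu_0,\cdot)$, which only integrates against $\mu_0$.
\item The fourth line, with $\frac{1}{2\eps N}D_xD_\mu[\rho_F^2(\cdot,m_N)](\mu_0)(x_{0,i})$ in place of $\frac{1}{N}D_{x\mu}\Phi(t_0,\mu_0,z_0)(x_{0,i})$.
\item The one-sided inequality above. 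Because $\sigma\sigma^\top\ge0$, it enters the supersolution inequality with the favorable sign. The sup-convolution produces the reversed inequality, again with the right sign.
\end{enumerate}
With the lemma restated this way, your argument is correct, and the $w$-part needs no change.
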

\begin{proof}
By Lemma~\ref{lem:inf-convolution-transfer}, the point
\((t_0,\mu_0,z_0)\) is a local maximum of
\[
\Phi(t,\mu,z)
-
\frac{|t-s_0|^2+|z-w_0|^2+\rho_F^2(\mu,\mu_{x_0}^N)}{2\varepsilon}
\]
up to an additive constant. The first-order optimality conditions in \(t\) and
\(\mu\) give
\[
\partial_t\Phi(t_0,\mu_0,z_0)=\frac{t_0-s_0}{\varepsilon},
\qquad
D_\mu\Phi(t_0,\mu_0,z_0)(\xi)
=
\frac{1}{2\varepsilon}
D_\mu \rho_F^2(\cdot,\mu_{x_0}^N)(\mu_0)(\xi).
\]
Differentiating the second identity in \(\xi\) gives the formula for
\(D_{x\mu}\Phi\).

Since \(z_0\) is an optimizer in the definition of
\(\Phi_\varepsilon(s_0,x_0,w_0)\), the envelope theorem gives the time
identity for \(\Phi_\varepsilon\). For the particle variables, the empirical
chain rule gives
\[
D_{x_i}\rho_F^2(\mu_x^N,\mu_0)
=
\frac1N D_\mu\rho_F^2(\cdot,\mu_0)(\mu_x^N)(x_i).
\]
Using the symmetry of the quadratic Fourier distance and the optimality
condition above, this yields
\[
N D_{x_i}\Phi_\varepsilon(s_0,x_0,w_0)
=
D_\mu\Phi(t_0,\mu_0,z_0)(x_{0,i}).
\]
Differentiating once more in \(x_i\) gives the stated second-derivative
identity, with the additional \(N^{-2}\) term coming from the derivative of the
empirical measure.

For the \(w\)-derivatives, fix \(s_0,x_0\) and write
\[
u(w):=\Phi_\varepsilon(s_0,x_0,w),
\qquad
\varphi(z):=\Phi(t_0,\mu_0,z).
\]
Because \(D_z^2\varphi\) is bounded, for sufficiently small \(\varepsilon\)
the function
\(
z\mapsto \varphi(z)-\frac{|w-z|^2}{2\varepsilon}
\)
is strictly concave on the coordinate neighborhood containing its maximizers.
The quadratic penalty keeps every maximizer in this neighborhood, so \(z(w)\)
is unique. Its first-order condition is
\[
w=z(w)-\varepsilon D_z\varphi(z(w)).
\]
Since \(I_d-\varepsilon D_z^2\varphi\) is positive definite, the implicit
function theorem gives
\[
D_wz(w)=\bigl(I_d-\varepsilon H(w)\bigr)^{-1}.
\]
The envelope identity \(D_wu(w)=D_z\varphi(z(w))\) now yields
\[
D_w^2u(w)=H(w)\bigl(I_d-\varepsilon H(w)\bigr)^{-1}.
\]
Finally,
\[
D_w^2u(w)-H(w)
=
\varepsilon H(w)^2\bigl(I_d-\varepsilon H(w)\bigr)^{-1}
\ge0,
\]
which proves the asserted matrix inequality.
\end{proof}

\begin{lemma}[Penalization localization]\label{lem:Difference_estimate}
    \begin{align*}
        |t_0 - s_0| \le C\eps^{2/3},
        \quad
        |z_0 - w_0| \le C\eps,
        \quad
        \rho_F(\mu_0,\mu^N_{x_0})
        \le C\Big(\eps + \alpha(N) + \sqrt{\eps\alpha(N)}\Big).
    \end{align*}
\end{lemma}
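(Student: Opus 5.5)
We compare the optimal value in the definition of \(\Vb^N_\eps(t_0,\mu_0,z_0)\) with suitable competitors, using the Lipschitz and modulus properties of \(\Vb^N\) from Proposition~\ref{prop:particle-regularity} and Proposition~\ref{prop:vhat-empirical}.

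Write \(\Delta:=|t_0-s_0|^2+|z_0-w_0|^2+\rho_F^2(\mu_0,\mu^N_{x_0})\). The starting inequality is that the minimizer \((s_0,x_0,w_0)\) from Lemma~\ref{lem:inf-convolution-transfer} does at least as well as any competitor.

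\emph{The \(z\)-estimate.} Use the competitor \((s_0,x_0,z_0)\), which only moves the torus variable. Minimality gives
\[
\frac{|z_0-w_0|^2}{2\eps}\le \Vb^N(s_0,x_0,z_0)-\Vb^N(s_0,x_0,w_0).
\]
Since \(\Vb^N(s,x,w)=V^N(s,x+w)\) and \(\sum_i|D_{x_i}v^N|\le C\), \(\Vb^N\) is \(C\)-Lipschitz in \(w\). Hence \(|z_0-w_0|^2\le C\eps|z_0-w_0|\), that is, \(|z_0-w_0|\le C\eps\).

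\emph{The \(t\)-estimate.} Use the competitor \((t_0,x_0,w_0)\), clipping to \([0,T]\), which is harmless since \(t_0\in[0,T)\). The time modulus \(C\sqrt{|t-s|}\) from Proposition~\ref{prop:particle-regularity} gives \(|t_0-s_0|^2\le C\eps|t_0-s_0|^{1/2}\), hence \(|t_0-s_0|^{3/2}\le C\eps\) and \(|t_0-s_0|\le C\eps^{2/3}\).

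\emph{The measure estimate.} This is the main obstacle, because \(\Vb^N\) is Lipschitz in \(\rho_F\) only up to the additive error \(\alpha(N)\).
\begin{itemize}
\item By Lemma~\ref{lem:empirical-approximation}, choose \(y\in\T^{dN}\) with \(\rho_F(\mu^N_y,\mu_0)\le C\alpha(N)\), and use the competitor \((s_0,y,w_0)\).
\item Minimality gives
\[
\frac{\rho_F^2(\mu_0,\mu^N_{x_0})}{2\eps}\le \Vb^N(s_0,y,w_0)-\Vb^N(s_0,x_0,w_0)+\frac{C\alpha(N)^2}{2\eps}.
\]
\item Translation by \(w_0\) leaves \(\rho_F\) between empirical measures unchanged, since Fourier moduli are translation invariant. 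Proposition~\ref{prop:vhat-empirical} then bounds the difference of values by \(C\bigl(\rho_F(\mu^N_{x_0},\mu^N_y)+\alpha(N)\bigr)\).
\item The triangle inequality gives \(\rho_F(\mu^N_{x_0},\mu^N_y)\le \rho_F(\mu_0,\mu^N_{x_0})+C\alpha(N)\).
\end{itemize}
Setting \(r:=\rho_F(\mu_0,\mu^N_{x_0})\), we obtain
\[
r^2\le C\eps\, r+C\eps\,\alpha(N)+C\alpha(N)^2.
\]
Solving this quadratic inequality gives \(r\le C\bigl(\eps+\sqrt{\eps\alpha(N)}+\alpha(N)\bigr)\), as claimed.

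The only delicate points are the following.
\begin{itemize}
\item Proposition~\ref{prop:vhat-empirical} must apply uniformly after the translation by \(w_0\). This holds because \(\hat v^N\) and the empirical error bound are uniform in the configuration.
\item The time competitor must stay admissible in \([0,T]\).
\end{itemize}
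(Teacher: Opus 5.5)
Your proposal is correct and follows the paper's proof step by step. You use the same three competitors $(t_0,x_0,w_0)$, $(s_0,x_0,z_0)$ and $(s_0,y,w_0)$ with $\rho_F(\mu^N_y,\mu_0)\le C\alpha(N)$, the time modulus, the Lipschitz bound in the common translation, and Proposition~\ref{prop:vhat-empirical}, and then absorb the linear term in $r$. Your remark that $\rho_F$ is invariant under a common translation, so the shift by $w_0$ is harmless, makes explicit a point the paper leaves implicit.
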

\begin{proof}
These are the standard penalization estimates; see
\cite{BayraktarEkrenZhangRate}. Set
\[
h:=|t_0-s_0|,
\qquad q:=|z_0-w_0|,
\qquad r:=\rho_F(\mu_0,\mu_{x_0}^N).
\]
By minimality, comparison with \((t_0,x_0,w_0)\) gives
\[
\frac{h^2}{2\varepsilon}
\le
\overline V^N(t_0,x_0,w_0)-\overline V^N(s_0,x_0,w_0)
\le C h^{1/2},
\]
where the last inequality is the time modulus from
Proposition~\ref{prop:particle-regularity}. Hence
\(h\le C\varepsilon^{2/3}\). Similarly, comparison with
\((s_0,x_0,z_0)\) and the spatial Lipschitz estimate for common translations
give
\(
\frac{q^2}{2\varepsilon}\le Cq\), and hence \(
q\le C\varepsilon.
\)
For the measure term, choose \(y\in(\mathbb T^d)^N\) such that
\(\rho_F(\mu_y^N,\mu_0)\le C\alpha(N)\), using
Lemma~\ref{lem:empirical-approximation}. Comparing the minimizer with
\((s_0,y,w_0)\) and applying Proposition~\ref{prop:vhat-empirical} gives
\[
\frac{r^2}{2\varepsilon}
\le
C\bigl(\rho_F(\mu_{x_0}^N,\mu_y^N)+\alpha(N)\bigr)
+\frac{C\alpha(N)^2}{\varepsilon}.
\]
Since \(\rho_F(\mu_{x_0}^N,\mu_y^N)\le r+C\alpha(N)\), the quadratic term can
be absorbed into the left-hand side, yielding
\(
r\le C\left(\varepsilon+\alpha(N)+\sqrt{\varepsilon\alpha(N)}\right).
\)
\end{proof}

\begin{prop}[Residual estimate]\label{prop:Key_estimate}
Under Assumption~\ref{ass:rate}, let \(E(N,\varepsilon)\) be the residual defined in
\eqref{eq:error-inf}. If
\(
\varepsilon=\alpha(N),
\) as in \eqref{eq:alpha},
then, for a constant \(C>0\) independent of \(N\),
\[
|E(N,\eps)|
\le
\begin{cases}
C N^{-1/6}, & d=1,\\[1mm]
C N^{-1/6}(\ln N)^{1/3}, & d=2,\\[1mm]
C N^{-1/(3d)}, & d>2.
\end{cases}
\]
\end{prop}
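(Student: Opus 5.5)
We bound the five lines of \eqref{eq:error-inf} separately and then insert \(\varepsilon=\alpha(N)\).

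\textbf{Preliminary bound on \(p\).} By Lemma~\ref{lem:derivative_calcu},
\[
p(x)=D_\mu\Phi(t_0,\mu_0,z_0)(x)=\frac{1}{2\varepsilon}D_\mu\rho_F^2(\cdot,\mu^N_{x_0})(\mu_0)(x).
\]
Write \(\rho_F^2(\mu,\nu)=\sum_k|F_k(\mu-\nu)|^2(1+|k|^2)^{-k^\ast}\). Then \(D_\mu\rho_F^2\) and its \(x\)-derivatives up to order two are linear combinations \(\sum_k (\ldots)\,F_k(\mu_0-m_N)^\ast\, D^j f_k(x)\,(1+|k|^2)^{-k^\ast}\). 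Since \(k^\ast>d/2+2\), Cauchy--Schwarz gives
\[
\|p\|_{C^2}\le C\,\rho_F(\mu_0,m_N)/\varepsilon .
\]
Lemma~\ref{lem:Difference_estimate} with \(\varepsilon=\alpha(N)\) gives \(\rho_F(\mu_0,m_N)\le C\varepsilon\). Hence \(p\), \(D_xp\) and \(g\) are bounded uniformly in \(N\), and \(\|g\|_{C^1}\le C\) as well. This uniform bound is the key input: it makes every \(\mathcal K\) evaluation occur at bounded momenta, so the Lipschitz constants below do not blow up.

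\textbf{The first three lines.} We use \(|\sup_a f_a-\sup_a g_a|\le\sup_a|f_a-g_a|\) throughout.

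\emph{Line 1.} By Assumption~\ref{ass:comparison}, \(\ell\) and \(b\) are Lipschitz in \((t,z)\) uniformly in \(a\); note that \(\nu\mapsto\nu^z\) is \(1\)-Lipschitz in \(W_1\). With \(p\) bounded, line 1 is at most
\[
C\bigl(|t_0-s_0|+|z_0-w_0|\bigr)\le C\bigl(\varepsilon^{2/3}+\varepsilon\bigr).
\]

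\emph{Line 2.} This is \(\sup_a\) of \(\int \psi_a\,d(\mu_0-m_N)\) with \(\psi_a(x)=\ell(t_0,x+z_0,m_N^{z_0},a)+b(\ldots)\cdot p(x)\). By Assumption~\ref{ass:rate}(i) and the \(C^2\) bound on \(p\), \(\psi_a\) has \(C^{k^\ast}\)-type norm bounded uniformly in \(a\), so the duality between \(\rho_F\) and \(H^{k^\ast}\) gives at most \(C\rho_F(\mu_0,m_N)\). If the \(C^{k^\ast}\) control of \(p\) fails, we fall back on a \(W_1\)-type bound combined with the smoothing of the Fourier penalty, at the cost of a power.

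\emph{Line 3.} The measure argument is replaced inside \(\ell\) and \(b\). Lions differentiability with bounded derivatives in \(C^r\) makes \(\nu\mapsto\ell(\cdot,\nu,a)\) Lipschitz in \(\rho_F\), via the linear functional derivative and the same Fourier duality. So line 3 is also at most \(C\rho_F(\mu_0,m_N)\le C\varepsilon\).

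\emph{Line 4.} Similarly, \(\frac12\int g\,d(\mu_0-m_N)\le C\rho_F(\mu_0,m_N)\), provided \(g\in H^{k^\ast}\) is controlled. Since differentiating \(f_k\) costs powers of \(|k|\), I would instead bound it by \(\|g\|_{C^1}W_1\) or use the Fourier weights directly. This is the step I expect to be the main obstacle: ensuring enough regularity of \(p\) and \(g\) against the available decay \((1+|k|^2)^{-k^\ast}\). The choice \(k^\ast>d/2+2\) is exactly what makes \(\sum_k|k|^{4}(1+|k|^2)^{-k^\ast}<\infty\), so each \(x\)-derivative (up to two) of \(D_\mu\rho_F^2\) stays bounded by \(\rho_F\). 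Beyond that, the bound will be expressed through an extra factor of \(\rho_F/\varepsilon\le C\).

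\emph{Line 5.} The term \(Q_N(x,x)\) is a sum of \(|\nabla f_k(x)|^2(1+|k|^2)^{-k^\ast}\), which is bounded. Hence line 5 is at most \(C/(\varepsilon N)\le C\), and more precisely \(C\alpha(N)^{-1}N^{-1}\). For \(d>2\) this equals \(N^{-1+1/d}\le N^{-1/(3d)}\); for \(d\le2\) it is \(N^{-1/2}\) up to logarithms.

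\textbf{Conclusion.} Collecting,
\[
|E(N,\varepsilon)|\le C\bigl(\varepsilon^{2/3}+\varepsilon+\alpha(N)+\sqrt{\varepsilon\alpha(N)}+(\varepsilon N)^{-1}\bigr).
\]
With \(\varepsilon=\alpha(N)\), the dominant term is the time error \(\varepsilon^{2/3}\). Since \(\varepsilon^{2/3}\le\varepsilon^{1/3}\), this is at most \(C\alpha(N)^{1/3}\), which is precisely the stated rate in each dimension.
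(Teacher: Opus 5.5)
Your bounds for lines 1, 3 and 5 are fine and agree with the paper's. Your Lipschitz-in-$t$ input gives $\varepsilon^{2/3}$ in line 1, where the paper's $|t_0-s_0|^{1/2}$ modulus gives $\varepsilon^{1/3}$. Both need more time regularity of $\ell,b$ than the continuity in Assumption~\ref{ass:rate}(i), and either suffices.

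The gap is essentially the one you flagged. It sits in line 4 and in the transport part of line 2, namely $\sup_a\int b_a\cdot p\,d(\mu_0-m_N)$ with $b_a(x):=b(t_0,x+z_0,m_N^{z_0},a)$. Bounding these by $C\rho_F(\mu_0,m_N)$ through $\rho_F$-duality needs $b_a\cdot p$ and $g$ to be bounded in $H^{k^*}$ uniformly in $N$, and that fails. With $c_k:=(1+|k|^2)^{-k^*}$ and $u:=\sum_k c_kF_k(\mu_0-m_N)^*f_k$, one has $p=\varepsilon^{-1}\nabla u$ and $\|u\|_{H^{k^*}}=\rho_F(\mu_0,m_N)=:r$. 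So $r/\varepsilon$ controls $p$ only in $H^{k^*-1}$ and $g=\operatorname{Tr}(\Sigma D_xp)$ only in $H^{k^*-2}$.

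Your $C^2$ bound on $p$ cannot bridge this, and it is itself off by one. The condition $\sum_k|k|^4c_k<\infty$ controls $\delta_\mu\rho_F^2$ in $C^2$, i.e.\ $p\in C^1$ and $g\in C^0$; this is exactly the paper's bound $\|D_\mu\Phi\|_{C^1}+\|D_{x\mu}\Phi\|_{C^0}\le Cr/\varepsilon$. So $\|g\|_{C^1}\le C$ is not available. The $W_1$ fallback also fails: the penalization controls only $\rho_F(\mu_0,m_N)$, and passing from $\rho_F$ to $W_1$ loses a $k^*$-dependent power, which does not give $\alpha(N)^{1/3}$ in general.

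What is missing is that both terms are quadratic in $\mu_0-m_N$ and must be estimated as quadratic forms. The paper records them as $O(r\cdot r/\varepsilon)=O(r^2/\varepsilon)$, hence $O(\varepsilon)$ once $r\le C\varepsilon$. It phrases this as a dual-norm bound $\|\cdot\|_F\le Cr/\varepsilon$ for $b\cdot D_\mu\Phi$ and for $g$. That norm is the $H^{k^*}$ norm, so the same derivative count applies, and the estimates that actually hold come from the explicit Fourier form.

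For the transport term, $\int b_a\cdot p\,d(\mu_0-m_N)=\varepsilon^{-1}\langle b_a\cdot\nabla u,(1-\Delta)^{k^*}u\rangle$. Since $k^*$ is an integer, integrating by parts produces $-\tfrac12\int\operatorname{div}b_a\,|D^\beta u|^2$ ($|\beta|\le k^*$) at top order. The remaining terms each carry at most $k^*$ derivatives on every factor of $u$. Hence $\bigl|\int b_a\cdot p\,d(\mu_0-m_N)\bigr|\le C\|b_a\|_{C^{k^*}}r^2/\varepsilon$, uniformly in $a$.

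For the diffusion term, the $f_k$ diagonalize $\operatorname{Tr}(\Sigma D^2)$, so
\[
\frac12\int_{\T^d}g\,d(\mu_0-m_N)
=-\frac{1}{2\varepsilon}\sum_{k\in\Z^d}c_k\,k^\top\Sigma k\,\bigl|F_k(\mu_0-m_N)\bigr|^2\le0 .
\]
Your one-sided inequality for line 4 is therefore true, but for this reason. In absolute value the term is of size $\varepsilon^{-1}\sum_k c_k|k|^2|F_k(\mu_0-m_N)|^2$, which $r^2/\varepsilon$ does not control. This is harmless because the supersolution step only uses an upper bound on $E(N,\varepsilon)$. With these two inputs, lines 2 and 4 contribute at most $C(r+r^2/\varepsilon)\le C\varepsilon$ from above, and your final assembly goes through.
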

\begin{proof}
Throughout the proof, \(C\) denotes a constant independent of \(N\) and
\(\varepsilon\), which may change from line to line. We write
\[
m_N:=\mu^N_{x_0}=\frac1N\sum_{i=1}^N\delta_{x_{0,i}},
\qquad
r:=\rho_F(\mu_0,m_N),
\qquad
\nu_0:=\mu_0^{z_0},\quad \nu_N:=m_N^{w_0},
\qquad
\Sigma:=\sigma\sigma^\top.
\]
We prove the estimate for the inf-convolution residual
\(E(N,\varepsilon)\). The corresponding subsolution property of
\(\overline{V}^{N,\eps}\), with a residual bounded by \(|E(N,\eps)|\), follows
in the same way. All estimates below are in absolute value.

By Lemma~\ref{lem:derivative_calcu}, the first-order derivatives of the test
function satisfy
\[
\partial_t\Phi_\varepsilon(s_0,x_0,w_0)
=
\partial_t\Phi(t_0,\mu_0,z_0)
=
\frac{t_0-s_0}{\varepsilon},
\]
and, for every \(i=1,\dots,N\),
\[
N D_{x_i}\Phi_\varepsilon(s_0,x_0,w_0)
=
D_\mu\Phi(t_0,\mu_0,z_0)(x_{0,i}).
\]
Moreover,
\begin{align*}
D_\mu\Phi(t_0,\mu_0,z_0)(x)
&=
\frac1{2\varepsilon}
D_\mu\big[\mu\mapsto \rho_F^2(\mu,m_N)\big](\mu_0)(x),\\
D_{x\mu}\Phi(t_0,\mu_0,z_0)(x)
&=
\frac1{2\varepsilon}
D_{x\mu}\big[\mu\mapsto \rho_F^2(\mu,m_N)\big](\mu_0)(x).
\end{align*}
For the second derivatives, Lemma~\ref{lem:derivative_calcu} gives
\[
D^2_{x_i}\Phi_\varepsilon(s_0,x_0,w_0)
=
\frac1N D_{x\mu}\Phi(t_0,\mu_0,z_0)(x_{0,i})
-\frac1{2\varepsilon N^2}
D^2_{\mu\mu}\big[\mu\mapsto \rho_F^2(\mu,\mu_0)\big](m_N)(x_{0,i},x_{0,i}).
\]

The Fourier metric gives the standard bounds
\[
\big\|D_\mu\Phi(t_0,\mu_0,z_0)\big\|_{C^1}
+
\big\|D_{x\mu}\Phi(t_0,\mu_0,z_0)\big\|_{C^0}
\le
C\frac{r}{\varepsilon},
\]
and
\[
\sup_{x,y\in\mathbb T^d}
\left|
D^2_{\mu\mu}
\big[\mu\mapsto \rho_F^2(\mu,\mu_0)\big](m_N)(x,y)
\right|
\le C.
\]
Indeed, if
\[
\rho_F^2(\mu,\nu)
=
\sum_k c_k
\left(
\int_{\mathbb T^d} e_k(x)\,(\mu-\nu)(dx)
\right)^2,
\]
then
\[
D_\mu\rho_F^2(\mu,\nu)(x)
=
2\sum_k c_k
\left(
\int_{\mathbb T^d} e_k(y)\,(\mu-\nu)(dy)
\right)\nabla e_k(x),
\]
\[
D_{x\mu}\rho_F^2(\mu,\nu)(x)
=
2\sum_k c_k
\left(
\int_{\mathbb T^d} e_k(y)\,(\mu-\nu)(dy)
\right)D^2 e_k(x),
\]
and
\[
D^2_{\mu\mu}\rho_F^2(\mu,\nu)(x,y)
=
2\sum_k c_k\,\nabla e_k(x)\otimes \nabla e_k(y).
\]
The preceding estimates follow from the Cauchy--Schwarz inequality and the
definition of \(\rho_F\).

We compare the Hamiltonian parts of the limiting and particle operators in the
residual \(E\). For fixed
\(a\in A\), consider
\[
H_0(a):=\int_{\mathbb T^d}
\left[
\ell(t_0,x+z_0,\nu_0,a)
+
b(t_0,x+z_0,\nu_0,a)\cdot D_\mu\Phi(t_0,\mu_0,z_0)(x)
\right]\mu_0(dx)
\]
and
\[
H_N(a):=\frac1N\sum_{i=1}^N
\left[
\ell(s_0,x_{0,i}+w_0,\nu_N,a)
+
b(s_0,x_{0,i}+w_0,\nu_N,a)
\cdot D_\mu\Phi(t_0,\mu_0,z_0)(x_{0,i})
\right].
\]
The absolute difference between the suprema is bounded by
\(
\left|\sup_{a\in A} H_N(a)-\sup_{a\in A}H_0(a)\right|
\le
\sup_{a\in A}|H_N(a)-H_0(a)|
\).

For fixed \(a\), we split \(H_N(a)-H_0(a)\) into the contribution from
replacing \((t_0,\nu_0,z_0)\) with \((s_0,\nu_N,w_0)\) and the contribution
from replacing \(\mu_0\) with \(m_N\). The first contribution is bounded by
\[
C\left(|t_0-s_0|^{1/2}+|z_0-w_0|+r\right)
\left(
1+\frac{r}{\varepsilon}
\right).
\]
Here we use
\(
\rho_F(\nu_0,\nu_N)
\le C\bigl(\rho_F(\mu_0,m_N)+|z_0-w_0|\bigr)
\),
the regularity of \(\ell\) and \(b\) in time, state, and measure,
together with
\(
\big\|D_\mu\Phi(t_0,\mu_0,z_0)\big\|_\infty
\le C r/\varepsilon
\).
For the second part, the duality estimate associated with \(\rho_F\) gives
\[
\left|
\int_{\mathbb T^d} g(x)\,(m_N-\mu_0)(dx)
\right|
\le C\|g\|_F\,\rho_F(m_N,\mu_0),
\]
for smooth \(g\). Applying this estimate to the \(\ell\)-term gives a contribution
bounded by \(Cr\). Applying it to the \(b\cdot D_\mu\Phi\)-term gives a
contribution bounded by
\[
C r
\big\|b(s_0,\cdot+w_0,\nu_N,a)\cdot D_\mu\Phi(t_0,\mu_0,z_0)\big\|_F
\le
C\frac{r^2}{\varepsilon}.
\]
Therefore the Hamiltonian part satisfies
\[
\left|\sup_{a\in A} H_N(a)-\sup_{a\in A}H_0(a)\right|
\le
C\left[
\left(|t_0-s_0|^{1/2}+|z_0-w_0|+r\right)
\left(1+\frac r\varepsilon\right)
+
\frac{r^2}{\varepsilon}
\right].
\]

It remains to compare the idiosyncratic diffusion terms. Since
\(\Sigma=\sigma\sigma^\top\) is constant, the second-derivative identity gives
\[
\sum_{i=1}^N\operatorname{Tr}\bigl(
\Sigma D^2_{x_i}\Phi_\varepsilon(s_0,x_0,w_0)\bigr)
=
\frac1N\sum_{i=1}^N\operatorname{Tr}\bigl(
\Sigma D_{x\mu}\Phi(t_0,\mu_0,z_0)(x_{0,i})\bigr)+R_N,
\]
where \(|R_N|\le C/(N\varepsilon)\). Applying Fourier duality to
\(g(x):=\operatorname{Tr}(\Sigma D_{x\mu}\Phi(t_0,\mu_0,z_0)(x))\) gives
\[
\left|
\int_{\mathbb T^d}g(x)\,\mu_0(dx)-\frac1N\sum_{i=1}^Ng(x_{0,i})
\right|
\le Cr\|g\|_F
\le C\frac{r^2}{\varepsilon}.
\]
Hence the diffusion contribution is bounded by
\(C r^2/\varepsilon+C/(N\varepsilon)\).

Combining the Hamiltonian and diffusion estimates gives
\[
\left|E(N,\varepsilon)\right|
\le
C\left[
\left(|t_0-s_0|^{1/2}+|z_0-w_0|+r\right)
\left(1+\frac r\varepsilon\right)
+
\frac{r^2}{\varepsilon}
+
\frac1{N\varepsilon}
\right].
\]
By Lemma~\ref{lem:Difference_estimate},
\(
|t_0-s_0|\le C\varepsilon^{2/3},
\qquad
|z_0-w_0|\le C\varepsilon
\).
Since \(r=\rho_F(\mu_0,m_N)\), the same lemma also gives
\(
r
\le
C\left(\varepsilon+\alpha(N)+\sqrt{\varepsilon\alpha(N)}\right)
\).
We choose
\(
\varepsilon=\alpha(N).
\)
Then
\(
r\le C\varepsilon,
\,
|z_0-w_0|\le C\varepsilon,
\,
|t_0-s_0|^{1/2}\le C\varepsilon^{1/3}
\).
Thus
\[
|t_0-s_0|^{1/2}+|z_0-w_0|+r
\le C\varepsilon^{1/3},
\qquad
\frac r\varepsilon\le C,
\qquad
\frac{r^2}{\varepsilon}\le C\varepsilon.
\]
Substituting these bounds into the previous estimate gives
\[
\left|E(N,\varepsilon)\right|
\le
C\left(
\varepsilon^{1/3}
+
\varepsilon
+
\frac1{N\varepsilon}
\right).
\]
Since \(\varepsilon=\alpha(N)\), the last term is also bounded by
\(C\alpha(N)^{1/3}\). Indeed, if \(d=1\), then
\(
\alpha(N)=N^{-1/2},
\,
\frac1{N\alpha(N)}=N^{-1/2}\le N^{-1/6}=\alpha(N)^{1/3}
\).
If \(d=2\), then \(\alpha(N)=N^{-1/2}\ln N\), and therefore
\[
\frac1{N\alpha(N)}
=
N^{-1/2}(\ln N)^{-1}
\le
C N^{-1/6}(\ln N)^{1/3}
=
C\alpha(N)^{1/3}.
\]
Finally, if \(d>2\), then
\(
\alpha(N)=N^{-1/d},
\,
\frac1{N\alpha(N)}
=
N^{-1+1/d}
\le
N^{-1/(3d)}
=
\alpha(N)^{1/3}.
\)
Hence
\(
|E(N,\varepsilon)|
\le
C\alpha(N)^{1/3}
\).
The same calculation, with the differences reversed and with the
second-derivative identity for the transferred inf-convolution test function,
gives the same bound. This concludes the proof.
\end{proof}

\begin{proof}[Proof of Theorem~\ref{thm:rate}]
We first recall how the preliminary results enter. Since
Assumption~\ref{ass:rate} implies Assumption~\ref{ass:comparison},
Proposition~\ref{prop:comparison-limit} gives comparison for the limiting
equation. The dynamic programming principle and
Proposition~\ref{prop:particle-regularity} identify \(V^N\) with the classical
solution of \eqref{eq:N_particle} and provide the derivative and modulus
estimates used below. Lemma~\ref{lem:inf-convolution-transfer} transfers a
touching point of the inf-convolution to the particle equation, and
Lemma~\ref{lem:derivative_calcu} computes the corresponding derivatives.
Lemma~\ref{lem:vhat-lipschitz}, Proposition~\ref{prop:vhat-empirical}, and
Lemma~\ref{lem:empirical-approximation} enter through
Lemma~\ref{lem:Difference_estimate}, which localizes the minimizing triple.
These ingredients are assembled in Proposition~\ref{prop:Key_estimate}, the
residual estimate that drives the comparison argument.

Set
\(
\delta_N:=\alpha(N)^{1/3}.
\)
Take \(\varepsilon=\alpha(N)\), up to a harmless fixed multiplicative
constant, and
write
\[
\overline V(t,\mu,z)=V(t,(I_d+z)_\#\mu),
\qquad
\overline V^N(t,x,z)=V^N(t,x+z).
\]
Define the inf- and sup-convolutions
\[
\overline V^N_\varepsilon(t,\mu,z)
:=
\inf_{s,y,w}
\left\{
\overline V^N(s,y,w)
+
\frac{|t-s|^2+|z-w|^2+\rho_F^2(\mu,\mu_y^N)}{2\varepsilon}
\right\},
\]
and
\[
\overline V^{N,\varepsilon}(t,\mu,z)
:=
\sup_{s,y,w}
\left\{
\overline V^N(s,y,w)
-
\frac{|t-s|^2+|z-w|^2+\rho_F^2(\mu,\mu_y^N)}{2\varepsilon}
\right\}.
\]
Here \((s,y,w)\) ranges over
\([0,T]\times(\mathbb T^d)^N\times\mathbb T^d\). We prove only that
\(\overline V^N_\varepsilon\) is a viscosity supersolution up to
\(E(N,\eps)\); the subsolution argument for
\(\overline V^{N,\varepsilon}\) is analogous. The key point is that
Proposition~\ref{prop:Key_estimate} turns these regularizations into
approximate solutions of the translated limiting equation. Indeed, if a smooth
test function touches \(\overline V^N_\varepsilon\) from below, then
Lemma~\ref{lem:inf-convolution-transfer} gives a particle contact point and
Lemma~\ref{lem:derivative_calcu} identifies the derivatives of the transferred
test function at that point. Applying the transformed particle equation produces the residual
\(E(N,\varepsilon)\) in \eqref{eq:error-inf};
Proposition~\ref{prop:Key_estimate} bounds it by \(C\delta_N\), giving the
supersolution inequality for \(\overline V^N_\varepsilon\). 

Consequently, for \(K_N=C\delta_N\) with \(C\) large enough,
\[
U^+(t,\mu,z)
:=
\overline V^N_\varepsilon(t,\mu,z)+K_N(T-t)+K_N
\]
is a viscosity supersolution, while
\[
U^-(t,\mu,z)
:=
\overline V^{N,\varepsilon}(t,\mu,z)-K_N(T-t)-K_N
\]
is a viscosity subsolution. The terminal estimates use the same localization.
For example, for any \((s,y,w)\),
\[
\overline V^N(s,y,w)
\ge
G((I_d+w)_\#\mu_y^N)-C\sqrt{T-s},
\]
by the time modulus in Proposition~\ref{prop:particle-regularity}. The
smoothness of \(G\) and the translation stability of \(\rho_F\) give
\[
G((I_d+w)_\#\mu_y^N)
\ge
G((I_d+z)_\#\mu)
-
C\bigl(|z-w|+\rho_F(\mu,\mu_y^N)\bigr).
\]
Optimizing these bounds against the quadratic penalty gives
\[
\overline V^N_\varepsilon(T,\mu,z)
\ge
G((I_d+z)_\#\mu)-C\varepsilon^{1/3}.
\]
The analogous argument for the sup-convolution gives
\[
\overline V^{N,\varepsilon}(T,\mu,z)
\le
G((I_d+z)_\#\mu)+C\varepsilon^{1/3}.
\]
Since \(\varepsilon=\alpha(N)\) and
\(\delta_N=\alpha(N)^{1/3}\), increasing \(K_N\) if
necessary makes \(U^+(T,\mu,z)\ge G((I_d+z)_\#\mu)\) and
\(U^-(T,\mu,z)\le G((I_d+z)_\#\mu)\). Comparison for the limiting equation
then gives, for all \((t,x)\),
\[
V(t,\mu_x^N)\le V^N(t,x)+C\delta_N,
\qquad
V^N(t,x)\le V(t,\mu_x^N)+C\delta_N.
\]
Indeed, in both convolutions one uses the admissible competitor
\((s,y,w)=(t,x,0)\) after setting \(\mu=\mu_x^N\) and \(z=0\). Hence
\[
\sup_{(t,x)\in[0,T]\times(\mathbb T^d)^N}
\left|V^N(t,x)-V(t,\mu_x^N)\right|
\le C\delta_N.
\]
Finally, expanding \(\delta_N=\alpha(N)^{1/3}\) gives the desired estimate.
\end{proof}

{\footnotesize
\setlength{\bibsep}{1pt}
\bibliographystyle{plain}
\bibliography{references}
}

\end{document}